\def\Scal{\mathcal{S}}
\def\Rcal{\mathcal{R}}
\def\Ncal{\mathcal{N}}
\def\Acal{\mathcal{A}}
\def\Jcal{\mathcal{J}}
\def\C{\mathbb{C}}\def\c{\mathbb{C}}
\def\d{\mathbb{D}}
\def\R{\mathbb{R}}\def\r{\mathbb{R}}
\def\n{\mathbb{N}}
\def\s{\mathbb{S}}
\def\h{\mathbb{H}}
\def\M{\mathbb{M}}
\def\sgot{\mathfrak{s}}
\def\ngot{\mathfrak{n}}
\def\mgot{\mathfrak{m}}
\def\Agot{\mathfrak{A}}
\def\Mgot{\mathfrak{M}}
\def\Ggot{\mathfrak{G}}
\def\Ogot{\mathfrak{O}}
\newtheorem{theorem}{Theorem}[section]
\newtheorem{proposition}[theorem]{Proposition}
\newtheorem{claim}[theorem]{Claim}
\newtheorem{corollary}[theorem]{Corollary}
\newtheorem{remark}[theorem]{Remark}
\theoremstyle{definition}
\numberwithin{equation}{section}
\numberwithin{figure}{section}
\begin{document}

\thispagestyle{empty}

\vspace*{1cm}
\noindent{\bf\LARGE Harmonic  diffeomorphisms between domains in the Euclidean $2$-sphere}

\vspace*{0.5cm}

\noindent{\bf\large Antonio Alarc\'{o}n {\small $\;\bullet\;$} Rabah Souam}

\footnote[0]{\vspace*{-0.4cm}

\noindent A. Alarc\'{o}n

\noindent Departamento de Geometr\'{\i}a y Topolog\'{\i}a, Universidad de Granada, E-18071 Granada, Spain

\noindent e-mail: {\tt alarcon@ugr.es}

\vspace*{0.1cm}

\noindent R. Souam

\noindent Institut de Math\'{e}matiques de Jussieu, CNRS UMR 7586, Universit\'{e} Paris Diderot – Paris 7, ``G\'{e}om\'{e}trie et Dynamique'', Site Chevaleret, Case 7012, 75205 – Paris Cedex 13,
France

\noindent e-mail: {\tt souam@math.jussieu.fr}

\vspace*{0.1cm}

\noindent The first author is partially supported by MCYT-FEDER research project MTM2007-61775 and Junta de Andaluc\'{i}a Grant P09-FQM-5088}

\vspace*{1cm}

{\small
\noindent {\bf Abstract}\hspace*{0.1cm} We study the existence or not of harmonic diffeomorphisms between certain domains in the Euclidean $2$-sphere. In particular, we show harmonic diffeomorphisms from circular domains in the complex plane onto finitely punctured spheres, with at least two punctures. This result follows from a general existence theorem for maximal graphs in the Lorentzian product $\M\times\r_1,$ where $\M$ is an arbitrary $\ngot$-dimensional compact Riemannian manifold, $\ngot\geq 2.$ In contrast, we show that there is no harmonic diffeomorphism from the unit complex disc onto the once punctured sphere and no harmonic diffeomeorphisms from finitely punctured spheres onto circular domains in the Euclidean $2$-sphere.

\vspace*{0.1cm}

\noindent{\bf Keywords}\hspace*{0.1cm} Harmonic diffeomorphisms $\cdot$ Maximal graphs

\vspace*{0.1cm}

\noindent{\bf Mathematics Subject Classification (2010)}\hspace*{0.1cm} 53C43 $\cdot$ 53C42 $\cdot$ 53C50
}


\section{Introduction}\label{sec:intro}

In 1952, Heinz \cite{He} proved there is no harmonic diffeomorphism from the unit complex disk $\d$ onto the complex plane $\c,$ with the euclidean metric. Later, Schoen and Yau \cite{SY} asked whether Riemannian surfaces which are related by a harmonic diffeomorphism are quasi-conformally related. Recently, Collin and Rosenberg \cite{CR} showed a harmonic diffeomorphism from $\c$ onto the hyperbolic plane $\h^2,$ disproving a conjecture by Schoen and Yau \cite{SY}. To do that, they constructed an entire minimal graph $\Sigma$ over $\h^2$ in the Riemannian product $\h^2\times\r,$ with the conformal type of $\c.$ Then the vertical projection $\Sigma\to\h^2$ is a surjective harmonic diffeomorphism. See \cite{GR} for further generalizations.


Let  $\s^2$ and $\overline{\c}$ denote  the $2$-dimensional Euclidean unit sphere and the Riemann sphere, respectively. A domain in $\overline{\C}$ is said to be a circular domain if every connected component of its boundary is a circle.

In this paper we study the existence or not of harmonic diffeomorphisms between certain domains in $\s^2.$ 
Our main result asserts:

\begin{quote}
{\bf Theorem I.}
{\em 

\begin{enumerate}[{\rm (i)}]
\item For any $\mgot\in\n,$ $\mgot\geq 2,$ and any subset $\{p_1,\ldots,p_\mgot\}\subset\s^2$ there exist a circular domain $U\subset\overline{\c}$ and a harmonic diffeomorphism $\phi:U\to\s^2-\{p_1,\ldots,p_\mgot\}.$

\item There exists no harmonic diffeomorphism $\varphi:\d\to\s^2-\{p\}.$ 

\item For any $\mgot\in\n,$ any subset $\{z_1,\ldots,z_\mgot\}\subset\overline{\c}$ and any pairwise disjoint closed discs $D_1,\ldots,D_\mgot$ in $\s^2$ there exists no harmonic diffeomorphism $\psi:\overline{\c}-\{z_1,\ldots,z_\mgot\}\to\s^2-\cup_{j=1}^\mgot D_j.$ 
\end{enumerate}
}
\end{quote}

%
%
%
%
%

{ Notice that Theorem I is related to Schoen and Yau's questions, since circular domains are of hyperbolic conformal type whereas $\overline{\c}$ with a finite set removed is parabolic. Moreover, it is worth mentioning that Items (i) and (iii) actually follow from much more general results (see Corollary \ref{co:diffeo} and Proposition \ref{pro:converse}). Concretely, we show that 
given a compact Riemannian surface $\M$ and a subset $\{p_1,\ldots,p_\mgot\}\subset\M,$ $\mgot\geq 2,$ then there exist an open Riemann surface $\Rcal$ and a harmonic diffeomorphism $\phi:\Rcal\to\M-\{p_1,\ldots,p_\mgot\}$ such that every end of $\Rcal$ is of hyperbolic conformal type.
}

Our strategy to show the harmonic diffeomorphism of Item (i) in Theorem I consists of constructing a maximal graph $\Sigma$ over $\s^2-\{p_1,\ldots,p_\mgot\}$ in the Lorentzian manifold $\s^2\times\r_1,$ with the conformal type of a circular domain. Then, the projection $\Sigma\to\s^2-\{p_1,\ldots,p_\mgot\}$ is a surjective harmonic diffeomorphism. 

In this direction, we prove the following general existence result:

\begin{quote}
{\bf Theorem II.}
{\em Let $\M=(\M,\langle\cdot,\cdot\rangle_\M)$ be a compact Riemannian manifold without boundary of dimension $\ngot\in\n,$ $\ngot\geq 2,$ and denote by  $\M\times\r_1$ the product manifold $\M\times\r$ endowed with the Lorentzian metric $\langle\cdot,\cdot\rangle_\M-dt^2.$ Let $\mgot\in\n,$ $\mgot\geq 2,$ and let $\Agot=\{(p_i,t_i)\}_{i=1,\ldots,\mgot}$ be a subset of $\M\times\r_1$ such that
\begin{itemize}
\item $p_i\neq p_j$ and
\item $|t_i-t_j|<{\rm dist}_\M(p_i,p_j),$ $\forall \{i,j\}\subset\{1,\ldots,\mgot\}$ with $i\neq j.$
\end{itemize}

Then there exists exactly one entire graph $\Sigma(\Agot)$ over $\M$ in $\M\times\r_1$ such that
\begin{itemize}
\item $\Agot\subset\Sigma(\Agot)$ and
\item $\Sigma(\Agot)-\Agot$ is a spacelike maximal graph over $\M-\{p_i\}_{i=1,\ldots,\mgot}.$ 
\end{itemize}

Moreover the space  $\Ggot_\mgot$  of entire maximal graphs over $\M$ in $\M\times\r_1$ with precisely $\mgot$ singularities, endowed with the topology of uniform convergence, is non-empty, and there exists a $m!$-sheeted covering, $\overline{\Ggot}_{\mgot}\to \Ggot_\mgot,$ where  
 $\overline{\Ggot}_{\mgot}$ is an open subset of $(\M\times\r)^\mgot.$ }

\end{quote}

Let us point out that our method is different from the one of Collin and Rosenberg \cite{CR} and strongly relies on the theory of maximal hypersurfaces in Lorentzian manifolds. More precisely, it is based on the construction of maximal hypersurfaces with isolated singularities in Lorentzian products $\M\times\r_1.$
The study of {\it complete} maximal surfaces, with a finite number of singularities and their moduli spaces in the 3-dimensional Minkowski space $\mathbb L^3,$  was developed by Fern\'{a}ndez, L\'{o}pez and Souam \cite{FLS}. Their study strongly relies on the Weierstrass representation for maximal surfaces in $\mathbb L^3$ . Our approach here relies on a different idea which consists of dealing with the existence problem in Theorem II
as a generalized Dirichlet problem. Let us also point out that Klyachin and Miklyukov \cite{KM}
have obtained results on the existence of solutions, with a finite number of singularities, to the maximal Hypersurface equation in the $n$-dimensional Minkowski space $\mathbb L^{n}$ with prescribed boundary conditions.

Harmonic maps from Riemann surfaces into $\s^2$ are related to other natural geometric theories. For instance, the Gauss map of constant mean curvature surfaces in $\r^3$ is harmonic for the conformal structure induced by the immersion \cite{Ru} (see also \cite{Ke}), whereas the Gauss map of positive constant Gaussian curvature is harmonic for the conformal structure of the second fundamental form \cite{GM}. The latter statement is the key in the proof of Theorem I-(ii). More precisely, we show that if the Gauss map of a surface of positive constant curvature in $\r^3$ is a diffeomorphism onto $\s^2-\{p\},$ then the conformal structure induced by the second fundamental form of the surface is that of $\c.$

On the other hand, using the harmonicity of the Gauss map of surfaces of positive constant curvature in $\r^3,$ harmonic diffeomorphisms from circular domains into domains in $\s^2$ bounded by a finite family of convex Jordan curves and satisfying a Neumann boundary condition have been recently shown by G\'{a}lvez, Hauswirth and Mira \cite{GHM}. It is an open question whether a harmonic diffeomorphism as those in Theorem I-(i) can be realized as the Gauss map of either a constant mean curvature or a constant Gaussian curvature surface in $\r^3.$

The paper is laid out as follows. In Section \ref{sec:preli} we state the necessary notations and preliminaries on harmonic maps between Riemannian manifolds and maximal graphs in Lorentzian product spaces. In Sections \ref{sec:thII} and \ref{sec:thI} we prove Theorems II and I, respectively. Also in Section \ref{sec:thI} we introduce some background on both Riemann surfaces and surfaces of positive constant curvature, for a well understanding of the proofs of items (i) and (ii) in Theorem I. Finally, in Section \ref{sec:maximal} we discuss about the relation between harmonic diffeomorphisms $U\to \s^2-\{p_1,\ldots,p_\mgot\}$ as those of Theorem I-(i) and conformal maximal immersions $U\to \s^2\times\r_1.$ 


\section{Preliminaries}\label{sec:preli}


Let $M=(M,g)$ and $N=(N,h)$ be smooth Riemannian manifolds. Given a smooth map $f:M\to N$ and a domain $\Omega\subset M$ with piecewise $C^1$ boundary $\partial \Omega,$  the quantity
\begin{equation}\label{eq:energyintegral}
E_\Omega(f)=\frac12\int_\Omega |df|^2 dV_g
\end{equation}
is called the {\em energy} of $f$ over $\Omega.$ Here $dV_g$ denotes the volume element of $M,$ and $|\cdot|$ the norm on $(N,h).$

A smooth map $f:M\to N$ is said to be {\em harmonic} if it is a critical point of the energy functional, that is, if for any relatively compact domain $\Omega\subset M$ and any smooth variation $F:M\times(-\epsilon,\epsilon)\to N$ of $f$ supported in $\Omega$ (i.e., $F$ is a smooth map, $f_0=f$ and $f_t|_{M-\Omega}=f|_{M-\Omega}$ $\forall t\in(-\epsilon,\epsilon),$ where $f_t:=F(\cdot,t):M\to N$ and $\epsilon>0$), the first variation $\frac{d}{dt} E_\Omega(f_t)|_{t=0}$ is zero.

If $M$ is $2$-dimensional, that is to say, a Riemannian surface, then the energy integral \eqref{eq:energyintegral} is invariant under conformal changes of the metric $g,$ hence so is the harmonicity of $f.$ Therefore, the harmonicity of a map from a Riemann surface to a Riemannian manifold is a well defined notion.
{ On the other hand, the harmonicity of a map is not preserved under conformal changes in the metric of the target manifold.}

See the surveys \cite{EL1,EL2,HW} for a good reference.


\begin{remark}
Throughout the paper we denote by $\M=(\M,\langle\cdot,\cdot\rangle_\M)$ a compact $\ngot$-dimensional Riemannian manifold without boundary, $\ngot\in\n,$ $\ngot\geq 2.$
\end{remark}

We denote by $\M\times\r_1$ the Lorentzian product space $\M\times\r$ endowed with the Lorentzian metric
\[
\langle\cdot,\cdot\rangle=\pi^*_\M(\langle\cdot,\cdot\rangle_\M)-\pi^*_\r(dt^2),
\]
where $\pi_\M$ and $\pi_\r$ denote the projections from $\M\times\r$ onto each factor. 
For simplicity, we write
\[
\langle\cdot,\cdot\rangle=\langle\cdot,\cdot\rangle_\M-dt^2.
\]

A smooth immersion $X:\Sigma\to\M\times\R_1$ of a connected $\ngot$-dimensional manifold $\Sigma$ is said to be {\em spacelike} if $X$ induces a Riemanninan metric $X^*(\langle\cdot,\cdot\rangle)$ on $\Sigma.$ 

Let $\Omega\subset \M$ be a connected domain and let $u:\Omega\to\r$ be a smooth function. Then the map 
\[
X^u:\Omega\to\M\times\r_1,\quad X^u(p)=(p,u(p))\;\forall p\in\Omega,
\]
determines a smooth graph over $\Omega$ in $\M\times\r_1.$ The metric induced on $\Omega$ by $\langle\cdot,\cdot\rangle$ via $X^u$ is given by 
\[
\langle\cdot,\cdot\rangle_u:=(X^u)^*(\langle\cdot,\cdot\rangle)=\langle\cdot,\cdot\rangle_\M-du^2,
\]
hence $X^u$ is spacelike if and only if $|\nabla u|<1$ on $\Omega,$ where $\nabla u$ denotes the gradient of $u$ in $\Omega$ and 
$|\nabla u|$ denotes its norm, both with respect to the metric $\langle\cdot,\cdot\rangle_\M$ in $\Omega.$ In this case the function $u$ is said to be spacelike as well. If $u$ is spacelike, then the mean curvature $H:\Omega\to\r$ of $X^u$ is given by the equation
\[
H=\frac1{\ngot}\, {\rm Div}\left(  \frac{\nabla u}{\sqrt{1-|\nabla u|^2}} \right),
\]
where ${\rm Div}$ denotes the divergence operator on $\Omega$ with respect to $\langle\cdot,\cdot\rangle_\M.$

A smooth function $u:\Omega\to\r$ and its graph $X^u:\Omega\to\M\times\R_1$ are said to be
{\em maximal} if $u$ is spacelike and $H$ vanishes identically on $\Omega.$

If $K\subset\M$ is compact then a function $u:K\to\r$ is said to be smooth (resp., spacelike, maximal) if and only if $u$ extends to an open domain containing $K$ as a smooth (resp., spacelike, maximal) function.

A locally Lipschitz  function $u:\Omega\to\r$ is said to be {\em weakly spacelike} if and only if $|\nabla u|\leq 1$ a.e. in $\Omega.$ In this case the graph $X^u$ is said to be weakly spacelike as well. If $u$ is weakly spacelike then the area of $X^u(\Omega)$ is given by
\begin{equation}\label{eq:area}
\Acal(u):=\int_\Omega \sqrt{1-|\nabla u|^2}\, dV_\M,
\end{equation}
where $dV_\M$ denotes the volume element of $\langle\cdot,\cdot\rangle_\M.$ A smooth $u:\overline{\Omega}\to\r$ is a critical point of \eqref{eq:area} if and only if $u$ is maximal. 

If $u:\Omega\to\r$ is maximal then $X^u:(\Omega,\langle\cdot,\cdot\rangle_u)\to (\M\times\r_1,\langle\cdot,\cdot\rangle)$ is a harmonic map. In particular  
\[
{\rm Id}:(\Omega,\langle\cdot,\cdot\rangle_u)\to (\Omega,\langle\cdot,\cdot\rangle_\M)
\]
is a harmonic diffeomorphism, and 
\[
u:(\Omega,\langle\cdot,\cdot\rangle_u)\to \r
\]
is a harmonic function.


\section{Moduli space of maximal graphs with isolated singularities. Proof of Theorem II}\label{sec:thII}

Throughout this section let $\mgot\in\n$  and let $\Agot=\{(p_i,t_i)\}_{i=1}^{\mgot}$ denote a set of points in $\M\times\r$ such that $p_i\neq p_j$ if $i\neq j,$ $\forall i,j\in\{1,\ldots,\mgot\}.$ 

We denote by $\Mgot_\Agot$ the space of continuous functions $u:\M\to\r$ such that
\begin{itemize}
\item $u(p_i)=t_i$ $\forall i=1,\ldots,\mgot,$ and
\item $\hat{u}:=u|_{\M-\{p_i\}_{i=1}^{\mgot}}$ is maximal.
\end{itemize}

The following claim trivially follows from the maximum principle for maximal surfaces.
\begin{claim}\label{cl:1}
If $\mgot=1$ then $\Mgot_\Agot$ consists of the constant function $u\equiv t_1.$
\end{claim}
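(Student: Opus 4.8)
The plan is to invoke the strong maximum principle (equivalently, the comparison principle) for the maximal surface equation, combined with the observation that an extremum of $u$ cannot be forced to occur at the singular point $p_1$. Recall that any $u\in\Mgot_\Agot$ is continuous on the compact manifold $\M$, hence attains its maximum value $M:=\max_\M u$ and its minimum value $m:=\min_\M u$.

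First I would rule out $M>t_1$. If $M>t_1$, then since $u(p_1)=t_1<M$, the maximum is attained at some point $q\in\M-\{p_1\}$. On a neighborhood of $q$ the function $\hat u$ is smooth, spacelike and maximal, so it solves the elliptic maximal surface equation ${\rm Div}\left(\nabla u/\sqrt{1-|\nabla u|^2}\right)=0$ there. The constant function $M$ is also a solution of this equation, and $u\leq M$ with equality at the interior point $q$. Subtracting the two equations, the difference $u-M$ satisfies a linear second order elliptic equation without zeroth order term, so the strong maximum principle applies and yields $u\equiv M$ on a neighborhood of $q$. Since $\ngot\geq 2$, the punctured manifold $\M-\{p_1\}$ is connected, and a standard propagation/connectedness argument then gives $u\equiv M$ on all of $\M-\{p_1\}$; by continuity $u(p_1)=M\neq t_1$, contradicting $u(p_1)=t_1$. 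Hence $M\leq t_1$.

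An identical argument applied to the minimum, comparing $u$ from above with the constant $m$ at a minimum point $q'\neq p_1$, shows that $m<t_1$ is impossible, so $m\geq t_1$. Combining the two inequalities gives $t_1\leq m\leq u\leq M\leq t_1$ on $\M$, whence $u\equiv t_1$, which is the assertion of the claim.

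The only point requiring care, and the reason the statement is not completely automatic, is that the maximum principle cannot be invoked directly at $p_1$, where $u$ is merely continuous and need not satisfy the maximal surface equation. The mild obstacle is therefore to guarantee that the extreme values of $u$ are attained in the regular region $\M-\{p_1\}$; this is exactly what the strict inequalities $M>t_1$ or $m<t_1$ provide, since the single prescribed value $u(p_1)=t_1$ then cannot itself be an extremum. The hypothesis $\ngot\geq 2$ enters only to keep $\M-\{p_1\}$ connected, ensuring that local constancy propagates to the whole regular region.
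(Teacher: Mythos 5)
Your argument is correct and is exactly the route the paper intends: the text simply states that the claim ``trivially follows from the maximum principle for maximal surfaces,'' and your write-up supplies the details of that maximum-principle argument (extrema exceeding $t_1$ must occur in the regular set, where the strong maximum principle for the maximal surface equation forces local and then global constancy on the connected set $\M-\{p_1\}$). Nothing further is needed.
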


From now on in this section assume that $\mgot\geq 2.$ The set $\Agot$ is said to satisfy the spacelike condition if and only if
\[
|t_i-t_j|<{\rm dist}_\M(p_i,p_j)\quad \forall i,j\in\{1,\ldots,\mgot\},\; i\neq j,
\]
where ${\rm dist}_\M(\cdot,\cdot)$ means distance in $(\M,\langle\cdot,\cdot\rangle_\M).$


From now on assume that $\Agot$ satisfies the spacelike condition.

For each $(i,n)\in\{1,\ldots,\mgot\}\times\n$ consider an open disk $B_i^n$ in $\M$ satisfying that
$\partial B_i^n$ is a smooth Jordan curve,
 $\overline{B_i^n}\cap\overline{B_j^n}=\emptyset$ if $i\neq j,$
 $\overline{B_i^{n+1}}\subset B_i^n,$ and
$\{p_i\}=\cap_{n\in\n} B_i^n.$
Define $\Delta_n=\M-\cup_{i=1}^\mgot B_i^n,$ $n\in\n.$ Let $\{t_i^n\}_{n\in\n}$ be a sequence of real numbers converging to $t_i,$ $i=1,\ldots,\mgot.$ 

 Consider the function $\varphi_n: \partial\Delta_n\to\r$ such that

\begin{equation}\label{eq:dirichletdata}
\text{$\varphi_n|_{\partial B_i^n}=t_i^n, \, i=1,\ldots,\mgot.$}
\end{equation}

Since $\Agot$ is finite then $\Agot$ satisfies the spacelike condition if and only if there exists $\epsilon_\Agot\in(0,1)$ such that $|t_i-t_j|<\epsilon_\Agot\cdot {\rm dist}_\M(p_i,p_j)$ $\forall i\neq j.$ It follows that there exists $n_0\in\n$ such that for each $n\geq n_0,$ the function $\varphi_n$ is $\epsilon_{\Agot,n}$-Lipschitz for some $\epsilon_{\Agot,n} \in(0,1).$

It is proved in \cite[p. 202]{Fe} that  there exists an $\epsilon_{\Agot,n}$-Lipschitz extension 
 $\widetilde{\varphi}_n$  of $\varphi_n$ to $\Delta_n.$ More precisely, such an extension is given by the formula:

\begin{equation}\label{eq:extension}
\text{$\widetilde{\varphi}_n(p)= \inf \{ \varphi_n(x)+\epsilon_{\Agot,n}  {\rm dist}_\M(p,x),\,\, x\in {\partial \Delta_n}\}$,  for  $p\in \Delta_n$.}
 \end{equation}

 Notice that $\widetilde{\varphi}_n$ is smooth near $\partial\Delta_n$. A simple approximation argument then shows that 
 
\begin{claim}\label{cl:sp-cond}
For all $n\geq n_0$, there exists a smooth spacelike function $\overline\varphi_n:\Delta_n\to\r$ such that $\overline\varphi_n|_{\partial B_i^n}=t_i^n, \, i=1,\ldots,\mgot.$
\end{claim}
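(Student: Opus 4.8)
Fix $n\geq n_0$. The plan is to obtain $\overline\varphi_n$ by smoothing the Lipschitz extension $\widetilde\varphi_n$ from \eqref{eq:extension} while keeping its gradient strictly below $1$ and without disturbing its boundary values. Two features of $\widetilde\varphi_n$ are used throughout: it is $\epsilon_{\Agot,n}$-Lipschitz with respect to ${\rm dist}_\M$, so that $|\nabla\widetilde\varphi_n|\leq\epsilon_{\Agot,n}<1$ at every point of differentiability; and it is already smooth on an open neighborhood $V$ of $\partial\Delta_n$. Fix once and for all a constant $\epsilon'\in(\epsilon_{\Agot,n},1)$; the goal is a smooth function that agrees with $\widetilde\varphi_n$ near $\partial\Delta_n$ and has gradient of norm at most $\epsilon'$ everywhere.

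First I would mollify $\widetilde\varphi_n$. Cover the compact manifold $\Delta_n$ by finitely many coordinate charts, pick a subordinate smooth partition of unity $\{\rho_k\}$, and for each $k$ let $\widetilde\varphi_n^{k,\delta}$ denote the function obtained by convolving $\widetilde\varphi_n$ with a standard mollifier of width $\delta$ in the $k$-th chart; set $\psi_\delta=\sum_k\rho_k\,\widetilde\varphi_n^{k,\delta}$. As $\delta\to0$ one has $\psi_\delta\to\widetilde\varphi_n$ uniformly on $\Delta_n$, and moreover $\psi_\delta\to\widetilde\varphi_n$ in $C^1$ on $V$ because $\widetilde\varphi_n$ is smooth there. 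For the gradient, $\nabla\psi_\delta=\sum_k\rho_k\,\nabla\widetilde\varphi_n^{k,\delta}+\sum_k(\nabla\rho_k)(\widetilde\varphi_n^{k,\delta}-\widetilde\varphi_n)$, where I used $\sum_k\nabla\rho_k=0$; the first sum is a $\{\rho_k\}$-convex combination of gradients each bounded by $\epsilon_{\Agot,n}$ times the metric distortion of the chart over a ball of radius $\delta$, and the second sum tends to $0$ uniformly. Since that distortion tends to $1$ as $\delta\to0$, one gets $|\nabla\psi_\delta|\leq\epsilon'$ everywhere once $\delta$ is small enough.

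Then I would restore the exact boundary values by cutting off. Choose a neighborhood $W$ of $\partial\Delta_n$ with $W\Subset V$ and a cutoff $\chi\in C^\infty(\Delta_n)$ with $0\leq\chi\leq1$, $\chi\equiv1$ on $W$ and ${\rm supp}\,\chi\subset V$, and set $\overline\varphi_n=\chi\,\widetilde\varphi_n+(1-\chi)\psi_\delta$. This function is smooth, since on ${\rm supp}\,\chi\subset V$ the factor $\widetilde\varphi_n$ is smooth while $\psi_\delta$ is smooth everywhere; and it equals $\widetilde\varphi_n$ on $W$, so in particular $\overline\varphi_n|_{\partial B_i^n}=t_i^n$ for $i=1,\ldots,\mgot$. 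For the spacelike condition, write
\[
\nabla\overline\varphi_n=\chi\,\nabla\widetilde\varphi_n+(1-\chi)\,\nabla\psi_\delta+(\widetilde\varphi_n-\psi_\delta)\,\nabla\chi.
\]
Where $\chi\equiv1$ we have $\overline\varphi_n=\widetilde\varphi_n$, hence $|\nabla\overline\varphi_n|\leq\epsilon_{\Agot,n}<1$; where $\chi\equiv0$ we have $\overline\varphi_n=\psi_\delta$, hence $|\nabla\overline\varphi_n|\leq\epsilon'<1$; and on the transition region ${\rm supp}\,\nabla\chi\subset V$ the three terms converge, as $\delta\to0$, to $\nabla\widetilde\varphi_n$ uniformly, the last one because $\nabla\chi$ is fixed while $\widetilde\varphi_n-\psi_\delta\to0$ uniformly and the first two by the $C^1$-convergence on $V$. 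Thus $|\nabla\overline\varphi_n|<1$ there as well for $\delta$ small, and choosing such a $\delta$ finishes the construction.

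The main obstacle is the gradient estimate for the mollification in the second step: on a curved manifold, convolution in coordinate charts need not preserve the Lipschitz constant exactly, and one must check that the combined effect of the metric distortion and of the partition of unity inflates $\epsilon_{\Agot,n}$ only by an amount that vanishes as $\delta\to0$. It is precisely here that the strict inequality $\epsilon_{\Agot,n}<1$ coming from the spacelike condition is used, leaving the room $\epsilon'-\epsilon_{\Agot,n}$ needed to absorb the error.
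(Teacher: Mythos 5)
Your proposal is correct and is essentially the argument the paper has in mind: the paper simply notes that $\widetilde\varphi_n$ is $\epsilon_{\Agot,n}$-Lipschitz with $\epsilon_{\Agot,n}<1$ and smooth near $\partial\Delta_n$, and then invokes ``a simple approximation argument''; your mollification, gradient estimate exploiting the slack $\epsilon'-\epsilon_{\Agot,n}$, and cutoff gluing near the boundary is a faithful and complete implementation of exactly that argument.
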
 

Then by \cite[Theorem 5.1]{Ge}, there exists a maximal function $u_n:\Delta_n\to\r$ such that 
\begin{equation}\label{eq:dirichlet}
\text{$u_n|_{\partial B_i^n}=\overline\varphi_n|_{\partial B_i^n}=t_i^n,$ $\forall n\geq n_0.$}
\end{equation}

Notice that the sequence $\{u_n\}_{n\in\n}$ is uniformly bounded (the $u_n$'s are maximal and there are uniform bounds on the boundary of the $\Delta_n$'s). Moreover $|\nabla u_n|<1$ on $\Delta_n,$ hence Ascoli-Arzela's Theorem and a diagonal argument give that, up to passing to a subsequence, 

\begin{claim}\label{cl:converge}
$\{u_n\}_{n\in\n}$ uniformly converges on compact sets of $\M-\{p_i\}_{i=1}^\mgot=\cup_{n\in\n}\Delta_n$ to a weakly spacelike function $\hat{u}:\M-\{p_i\}_{i=1}^\mgot\to\r.$
\end{claim}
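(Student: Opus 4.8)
The plan is to verify the hypotheses of the Ascoli--Arzel\`a theorem on an exhaustion of $\M-\{p_i\}_{i=1}^\mgot$ by compact sets and then extract a diagonal subsequence. Two uniform a priori estimates drive the argument: a uniform $C^0$ bound and a uniform Lipschitz (hence equicontinuity) bound for the family $\{u_n\}$, both valid on compact subsets lying eventually inside the $\Delta_n$.

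For the $C^0$ bound I would invoke the maximum principle for the maximal graph equation. Each $u_n$ solves the quasilinear elliptic equation ${\rm Div}\bigl(\nabla u_n/\sqrt{1-|\nabla u_n|^2}\bigr)=0$ on $\Delta_n$, so comparison with the constant barriers $\min_i t_i^n$ and $\max_i t_i^n$ yields $\min_i t_i^n\le u_n\le \max_i t_i^n$ on $\Delta_n$. Since $t_i^n\to t_i$ as $n\to\infty$ for each $i$, the boundary values are uniformly bounded, say $|t_i^n|\le C$ for all $i,n$, whence $|u_n|\le C$ on $\Delta_n$ uniformly in $n$. For equicontinuity I would use that $u_n$ is spacelike, i.e.\ $|\nabla u_n|<1$ with respect to $\langle\cdot,\cdot\rangle_\M$; integrating this bound along paths shows $u_n$ is $1$-Lipschitz on $\Delta_n$ with respect to the intrinsic Riemannian distance of $\Delta_n$. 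Fixing a compact set $K\subset\M-\{p_i\}_{i=1}^\mgot$, for all $n$ large enough $K$ lies in the interior of $\Delta_n$, and then nearby points of $K$ are joined by short minimizing $\M$-geodesics lying in $\Delta_n$, so $\{u_n|_K\}$ is uniformly equicontinuous, uniformly in $n$.

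Since $\Delta_n\subset\Delta_{n+1}$ and $\cup_{n\in\n}\Delta_n=\M-\{p_i\}_{i=1}^\mgot$, I would choose compact sets $K_1\subset K_2\subset\cdots$ with $K_m\subset{\rm int}\,\Delta_m$ and $\cup_m K_m=\M-\{p_i\}_{i=1}^\mgot$. On each $K_m$ the bounded, equicontinuous family $\{u_n\}_{n\ge m}$ has a uniformly convergent subsequence by Ascoli--Arzel\`a; a standard diagonal extraction then produces a single subsequence of $\{u_n\}$ converging uniformly on every $K_m$, hence uniformly on compact subsets of $\M-\{p_i\}_{i=1}^\mgot$, to a function $\hat{u}$. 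Being a locally uniform limit of $1$-Lipschitz functions, $\hat{u}$ is itself locally $1$-Lipschitz, so by Rademacher's theorem it is differentiable almost everywhere with $|\nabla\hat{u}|\le 1$ a.e.; that is, $\hat{u}$ is weakly spacelike, as required.

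The genuinely substantive input is the $C^0$ bound, which rests on the maximum principle for the maximal hypersurface equation; the equicontinuity is immediate from the spacelike condition $|\nabla u_n|<1$, and the remaining compactness is routine. The only points requiring care are the bookkeeping in the diagonal argument (matching the exhaustion $K_m$ to the domains $\Delta_m$, using $K_m\subset{\rm int}\,\Delta_m$) and the passage of the pointwise gradient bound to the limit, which I would handle through the Lipschitz characterization rather than attempting to pass $|\nabla u_n|<1$ to the limit directly.
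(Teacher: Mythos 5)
Your proposal is correct and follows essentially the same route as the paper, which likewise obtains the uniform $C^0$ bound from maximality plus the uniformly bounded boundary data, equicontinuity from $|\nabla u_n|<1$, and then applies Ascoli--Arzel\`a with a diagonal argument, the convergence being asserted only up to a subsequence. Your additional care in passing the gradient bound to the limit via the Lipschitz characterization (rather than pointwise) is exactly the right way to justify that $\hat{u}$ is weakly spacelike.
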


Moreover, from \eqref{eq:dirichlet}, the convergence of $\{t_i^n\}_{n\in\n}$ to $t_i$ and the Lipschitz continuity of $\hat{u}$ one has that

\begin{claim}\label{cl:points}
$\hat{u}$ extends to a weakly spacelike function $u:\M\to\r$ with $u(p_i)=t_i$ $\forall i=1,\ldots,\mgot.$
\end{claim}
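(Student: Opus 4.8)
The plan is to combine the uniform $1$-Lipschitz bound enjoyed by every $u_n$ with the fact that the circles $\partial B_i^n$ collapse to the point $p_i$. Since $|\nabla u_n|<1$ on $\Delta_n$, integrating along minimizing curves shows that $u_n$ is $1$-Lipschitz with respect to ${\rm dist}_\M$, and the same bound passes to the limit $\hat u$ on $\M-\{p_i\}_{i=1}^{\mgot}$; this is precisely the Lipschitz continuity recorded in Claim \ref{cl:converge}.

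First I would derive a quantitative estimate for $\hat u$ near each puncture. Fix $i$ and a small geodesic ball $B(p_i,\rho)$ around $p_i$ containing no other $p_j$. For $p\in B(p_i,\rho)-\{p_i\}$ and $n$ large (so that $p\notin\overline{B_i^n}$ and the remaining disks $B_j^n$, $j\neq i$, lie outside $B(p_i,\rho)$), the minimizing $\M$-geodesic from $p$ to the nearest point of $\partial B_i^n$ has length ${\rm dist}_\M(p,\partial B_i^n)<\rho$ and stays inside $\Delta_n$; using the boundary condition \eqref{eq:dirichlet} it follows that
\[
|u_n(p)-t_i^n|\le {\rm dist}_\M(p,\partial B_i^n).
\]
Letting $n\to\infty$ and invoking the uniform convergence $u_n\to\hat u$ on compact sets (Claim \ref{cl:converge}), the convergence $t_i^n\to t_i$, and ${\rm dist}_\M(p,\partial B_i^n)\to{\rm dist}_\M(p,p_i)$, I obtain the key estimate
\[
|\hat u(p)-t_i|\le {\rm dist}_\M(p,p_i),\qquad p\in B(p_i,\rho)-\{p_i\}.
\]

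The conclusion is then immediate: setting $u(p_i):=t_i$ and $u:=\hat u$ on $\M-\{p_i\}_{i=1}^{\mgot}$, the estimate forces $\hat u(p)\to t_i$ as $p\to p_i$, so $u$ is continuous on all of $\M$ and satisfies $u(p_i)=t_i$. Being the continuous extension of the globally $1$-Lipschitz function $\hat u$, the map $u$ is itself $1$-Lipschitz, and since the finite set $\{p_i\}_{i=1}^{\mgot}$ has measure zero the inequality $|\nabla u|\le 1$ persists almost everywhere; thus $u$ is weakly spacelike on $\M$. The only point needing care is the convergence ${\rm dist}_\M(p,\partial B_i^n)\to{\rm dist}_\M(p,p_i)$, which follows from $\{p_i\}=\cap_{n}B_i^n$ and ${\rm diam}(B_i^n)\to 0$ via the triangle inequality $|{\rm dist}_\M(p,q)-{\rm dist}_\M(p,p_i)|\le{\rm diam}(B_i^n)$ for $q\in\partial B_i^n$; I do not expect any genuine obstacle, the whole content being the collapse of the Dirichlet data against the uniform Lipschitz control.
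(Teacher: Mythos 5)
Your argument is correct and is exactly the route the paper takes, which it compresses into one sentence (``from \eqref{eq:dirichlet}, the convergence of $\{t_i^n\}_{n\in\n}$ to $t_i$ and the Lipschitz continuity of $\hat{u}$''): you simply supply the details, namely the estimate $|u_n(p)-t_i^n|\le{\rm dist}_\M(p,\partial B_i^n)$ obtained by integrating $|\nabla u_n|<1$ along a minimizing geodesic staying in $\Delta_n$, and its passage to the limit. No discrepancy with the paper's proof.
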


By the results in \cite[\textsection6]{Ba}, 
$\hat{u}$ is a maximal function except for a set of points $\Lambda\subset \M-\{p_i\}_{i=1}^\mgot$ given by
$\Lambda:=\big\{p\in\M-\{p_i\}_{i=1}^\mgot\;|\; (p,\hat{u}(p))=\gamma(s_0)$ for some $0<s_0<1,$
where $\gamma:[0,1]\to\M\times\r_1$ is a null geodesic such that $\gamma((0,1))\subset X^{\hat{u}}(\M-\{p_i\}_{i=1}^\mgot)$ and $\pi_\M(\{\gamma(0),\gamma(1)\})\subset \{p_i\}_{i=1}^\mgot\big\}.$

Since $\Agot$ satisfies the spacelike condition then $\Lambda=\emptyset$ and 

\begin{claim}\label{cl:existence}
$u\in\Mgot_\Agot.$
\end{claim}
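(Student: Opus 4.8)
The plan is to deduce the claim from the single fact that the exceptional set $\Lambda$ is empty, and then to establish $\Lambda=\emptyset$ directly from the spacelike condition. Indeed, by Claim \ref{cl:points} the function $u:\M\to\r$ is continuous, weakly spacelike, and satisfies $u(p_i)=t_i$ for every $i$; and by the results in \cite[\textsection6]{Ba} its restriction $\hat u=u|_{\M-\{p_i\}}$ is maximal away from $\Lambda$. Hence, once we know $\Lambda=\emptyset$, the function $\hat u$ is maximal on all of $\M-\{p_i\}$ and $u$ fulfils the two defining requirements of $\Mgot_\Agot$, so that $u\in\Mgot_\Agot$.

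To show $\Lambda=\emptyset$ I would argue by contradiction. If $p\in\Lambda$, there is a null geodesic $\gamma:[0,1]\to\M\times\r_1$ with $\gamma((0,1))\subset X^{\hat u}(\M-\{p_i\})$ and $\pi_\M(\{\gamma(0),\gamma(1)\})\subset\{p_i\}$. Writing $\gamma(s)=(\alpha(s),\tau(s))$ and using that the Lorentzian product metric splits, $\alpha$ is a geodesic of $(\M,\langle\cdot,\cdot\rangle_\M)$ and $\tau$ is affine; the null condition $0=\langle\gamma',\gamma'\rangle=|\alpha'|^2-(\tau')^2$ forces $|\alpha'|=|\tau'|$ along $\gamma$, where $|\cdot|$ denotes the $\M$-norm.

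The key step is to anchor the two endpoints. Say $\pi_\M(\gamma(0))=p_i$ and $\pi_\M(\gamma(1))=p_j$. For $s\in(0,1)$ the point $\gamma(s)$ lies on the graph, so $\tau(s)=u(\alpha(s))$ there; letting $s\to 0^+$ and $s\to 1^-$ and invoking the continuity of $u$ together with $u(p_k)=t_k$, we get $\gamma(0)=(p_i,t_i)$ and $\gamma(1)=(p_j,t_j)$. Moreover $i\neq j$, for otherwise both endpoints coincide, $\tau$ is constant, and then $\alpha$ is constant by the null condition, contradicting that $\gamma$ is a nonconstant geodesic. Since $\alpha$ has constant speed, its length over $[0,1]$ equals $\int_0^1|\alpha'|\,ds=\int_0^1|\tau'|\,ds=|\tau(1)-\tau(0)|=|t_i-t_j|$; as this length is at least ${\rm dist}_\M(p_i,p_j)$, we obtain $|t_i-t_j|\geq{\rm dist}_\M(p_i,p_j)$, contradicting the spacelike condition. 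Therefore $\Lambda=\emptyset$.

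I expect the only delicate point to be the determination of the time-coordinates of $\gamma(0)$ and $\gamma(1)$: a priori the null geodesic is known to lie on the graph only on its interior $(0,1)$, so it is exactly the continuity of $u$ furnished by Claim \ref{cl:points} that pins the endpoints to the prescribed heights $t_i,t_j$. Once this is in place, the remaining inequality is immediate, and it is precisely the \emph{strict} inequality in the spacelike condition that prevents any null segment of the graph from joining two singular points.
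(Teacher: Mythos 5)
Your proof is correct and follows the same route as the paper: invoke Bartnik's regularity results to reduce the claim to showing $\Lambda=\emptyset$, and then rule out any null geodesic segment of the graph joining two singular points by means of the strict inequality in the spacelike condition. The paper merely asserts this last step, whereas you supply the (correct) details — the splitting of null geodesics in the product, the anchoring of the endpoints at $(p_i,t_i)$ and $(p_j,t_j)$ via the continuity of $u$, and the length comparison yielding $|t_i-t_j|\geq{\rm dist}_\M(p_i,p_j)$.
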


Now let us show the following

\begin{claim}\label{cl:uni}
$\Mgot_\Agot$ consists of exactly one element.
\end{claim}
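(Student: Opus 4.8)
The plan is to deduce uniqueness from the strong maximum principle applied to the difference of two solutions, exploiting the fact that any element of $\Mgot_\Agot$ is continuous up to the singular points and takes the prescribed value there. First I would take two elements $u,v\in\Mgot_\Agot$ and set $w=u-v$. By definition $w$ is continuous on the compact manifold $\M$ and $w(p_i)=t_i-t_i=0$ for every $i$, while on the connected set $\Omega:=\M-\{p_i\}_{i=1}^{\mgot}$ (connected because $\ngot\geq 2$) both $u$ and $v$ are smooth and spacelike, $|\nabla u|<1$ and $|\nabla v|<1$, and both solve the maximal graph equation ${\rm Div}(a(\nabla u))=0$, where $a(p)=p/\sqrt{1-|p|^2}$ is the flux field defined on the open unit ball $\{|p|<1\}$ of each tangent space.

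Next I would linearize the equation along the segment joining the two gradients. Since $u$ and $v$ are both maximal on $\Omega$,
\[
0={\rm Div}\big(a(\nabla u)-a(\nabla v)\big)={\rm Div}(A\,\nabla w),\qquad
A:=\int_0^1 Da\big((1-s)\nabla v+s\nabla u\big)\,ds ,
\]
where $Da$ denotes the differential of $a$. Because the spacelike ball $\{|p|<1\}$ is convex and contains both $\nabla u(q)$ and $\nabla v(q)$ for each $q\in\Omega$, the segment $(1-s)\nabla v(q)+s\nabla u(q)$ remains inside it, so $A$ is well defined. A direct computation gives $Da(p)=(1-|p|^2)^{-1/2}\big(I+(1-|p|^2)^{-1}\,p\otimes p\big)$, which is symmetric and positive definite for $|p|<1$; hence $A$ is too. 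Thus $w$ satisfies a linear divergence-form equation on $\Omega$ that, on every compact subset of $\Omega$, has smooth coefficients and is uniformly elliptic.

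Then I would invoke the strong maximum principle together with continuity at the punctures. Let $w$ attain its maximum over $\M$ at a point $q_\ast$. If $q_\ast\in\Omega$, then $w$ attains an interior maximum for the elliptic equation above, so Hopf's strong maximum principle forces $w$ to be constant on the connected set $\Omega$, and by continuity on all of $\M$; since $w(p_i)=0$ this gives $w\equiv 0$. Otherwise $q_\ast=p_i$ for some $i$, whence $\max_\M w=w(p_i)=0$ and so $w\le 0$ on $\M$. Applying the same alternative to $-w=v-u$, which is again a difference of two maximal functions, yields either $w\equiv 0$ or $w\ge 0$ on $\M$. In every case $w\equiv 0$, that is $u=v$, which is exactly the assertion that $\Mgot_\Agot$ has a single element.

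The main obstacle is the degeneration at the punctures $p_i$: there the graphs become asymptotic to light cones, so $|\nabla u|,|\nabla v|\to 1$ and the ellipticity of $A$ collapses, preventing any uniform estimate or removable-singularity statement across $p_i$. The whole point of the argument is to sidestep this. The continuity of $w$ on all of $\M$, combined with the normalization $w(p_i)=0$, allows the extrema of $w$ to be located either in the smooth spacelike region $\Omega$, where the strong maximum principle applies, or precisely at a puncture, where the value is already zero; consequently no control whatsoever on the degenerate coefficients of $A$ near the $p_i$ is required.
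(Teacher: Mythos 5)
Your argument is correct and is essentially the paper's: both reduce uniqueness to the strong maximum principle on the smooth region $\M-\{p_i\}_{i=1}^{\mgot}$, using the prescribed values $u(p_i)=t_i$ to rule out the punctures as locations of a positive extremum of the difference. The paper packages this as a sliding argument (translate $u_1$ vertically by the least $r_1\geq 0$ until it touches $u_2$ and invoke the tangency principle for maximal graphs), whereas you take the difference directly and write out the linearization justifying the maximum principle; the underlying mechanism is the same.
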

\begin{proof}
Consider $u_1,u_2\in\Mgot_\Agot.$ By compactness of $\M$ there exists  $r_1\in\r, \, r_1\geq 0,$ such that $r_1+u_1\geq u_2$ on $\M$ and the equality holds at a non-empty subset $W\subset\M.$ If $r_1>0$ then, since $u_1,u_2\in\Mgot_\Agot,$ we must have $W\cap(\M-\{p_i\}_{i=1}^\mgot)\neq\emptyset.$ By the maximum principle for maximal surfaces $r_1+u_1=u_2$ on $\M.$ This contradicts the fact that  $u_1,u_2\in\Mgot_\Agot.$ Therefore $r_1=0$ and so   $u_1\geq u_2.$ In a symmetric way we also have $u_2\geq u_1.$ Therefore $u_1=u_2$ and we are done.
\end{proof}

At this point notice that the first part of Theorem II in the introduction follows from Claims \ref{cl:existence} and \ref{cl:uni}. Even more,

\begin{remark}\label{re:CMC}
The first part of Theorem II holds with the same proof if in the sentence
\begin{itemize}
\item $\Sigma(\Agot)-\Agot$ is a spacelike maximal graph over $\M-\{p_i\}_{i=1,\ldots,\mgot}$
\end{itemize}
one changes ``maximal'' by ``constant mean curvature''.
\end{remark}

Write $\Mgot_\Agot=\{u_\Agot\}.$

Denote by $\Ggot_\mgot$ the set of functions $u_\Agot$ such that $\Agot$ consists of $\mgot$ points, $\Agot$ satisfies the spacelike condition and $|\nabla u_\Agot|=1$ at any point in $\pi_\M(\Agot).$

Consider a sequence $\{u_{\Agot_n}\}_{n\in\n\cup\{0\}}\subset\Ggot_\mgot.$ We say that $\{\Agot_n\}_{n\in\n}\to\Agot_0$ if and only if, up to a relabeling, $\{{\rm dist}_{\M\times\r}\big((p_i^n,t_i^n),(p_i^0,t_i^0)\big)\}_{n\in\n}\to 0$ $\forall i=1,\ldots,\mgot,$ where $\Agot_k=\{(p_i^k,t_i^k)\}_{i=1,\ldots,\mgot}$ $\forall k\in\n\cup\{0\},$ and  ${\rm dist}_{\M\times\r}(\cdot,\cdot)$ means distance in $\M\times\r$ with respect to the metric $\langle\cdot,\cdot\rangle_\M+dt^2.$ Likewise we define the convergence of a sequence of families of $\mgot$ points in $\M.$

\begin{claim}\label{cl:uniform}
$\{u_{\Agot_n}\}_{n\in\n}$ uniformly converges to $u_{\Agot_0}$ in the $C^0$ topology in $\M$ if and only if $\{\Agot_n\}_{n\in\n}\to \Agot_0.$
\end{claim}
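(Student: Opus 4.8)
The plan is to prove both implications at once from a single compactness principle. Every $u_{\Agot_n}$ is weakly spacelike, hence $1$-Lipschitz on $(\M,\langle\cdot,\cdot\rangle_\M)$; since the heights $t_i^n=u_{\Agot_n}(p_i^n)$ stay bounded and $\M$ is compact, the family $\{u_{\Agot_n}\}$ is uniformly bounded as well. By the Ascoli-Arzela theorem it is therefore precompact in $C^0(\M)$, so in each direction it suffices to identify all possible subsequential uniform limits and to invoke the usual subsequence principle (if every subsequence has a further subsequence with a prescribed limit, the whole sequence has that limit).

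For the implication $\{\Agot_n\}\to\Agot_0\Rightarrow$ uniform convergence, I would pass to a subsequence with $u_{\Agot_n}\to v$ uniformly, $v$ weakly spacelike, and show $v\in\Mgot_{\Agot_0}$. The point conditions are immediate: by the $1$-Lipschitz bound $|u_{\Agot_n}(p_j^0)-t_j^n|=|u_{\Agot_n}(p_j^0)-u_{\Agot_n}(p_j^n)|\le {\rm dist}_\M(p_j^0,p_j^n)\to 0$, and with $t_j^n\to t_j^0$ this gives $v(p_j^0)=t_j^0$. For maximality I would argue exactly as in the proof of Claim \ref{cl:existence}: on any compact $K\subset\M-\{p_j^0\}$ the moving singularities $p_j^n\to p_j^0$ eventually leave $K$, so the $u_{\Agot_n}$ are honest maximal graphs near $K$, and by Bartnik's description \cite[\textsection6]{Ba} the limit $v$ is maximal on $\M-\{p_j^0\}$ off a set $\Lambda_0$ of null segments with endpoints projecting into $\{p_j^0\}$; the strict spacelike condition on $\Agot_0$ forces $\Lambda_0=\emptyset$. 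Hence $v\in\Mgot_{\Agot_0}$, and uniqueness (Claim \ref{cl:uni}) gives $v=u_{\Agot_0}$.

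For the converse, assume $u_{\Agot_n}\to u_{\Agot_0}$ uniformly. After passing to a subsequence and relabelling I may assume $p_i^n\to q_i$ and $t_i^n\to s_i$; the $1$-Lipschitz identity again yields $s_i=u_{\Agot_0}(q_i)$, so it remains only to prove that the limiting configuration $\Agot^\ast=\{(q_i,s_i)\}$ equals $\Agot_0$. Writing $Q=\{q_1,\dots,q_\mgot\}$, the key step is to show that $u_{\Agot_0}$ is smooth and strictly spacelike on $\M-Q$: on compact subsets of $\M-Q$ the $u_{\Agot_n}$ are eventually free of singularities, so interior estimates for maximal graphs \cite[\textsection6]{Ba} upgrade the convergence to $C^\infty_{loc}$ and keep $|\nabla u_{\Agot_n}|$ bounded away from $1$, whence the limit is strictly spacelike and maximal there. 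Consequently the singular set $\{p_j^0\}$ of $u_{\Agot_0}$ is contained in $Q$; since it has exactly $\mgot$ points while $Q$ has at most $\mgot$, we get $Q=\{p_j^0\}$ with the $q_i$ pairwise distinct, i.e. (after relabelling) $q_i=p_i^0$ and therefore $s_i=u_{\Agot_0}(p_i^0)=t_i^0$. Thus $\Agot^\ast=\Agot_0$, and the subsequence principle gives $\{\Agot_n\}\to\Agot_0$.

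I expect the main obstacle to be exactly the interior statement used in the last direction, namely that singularities of maximal graphs can neither be created at a regular point of the limit nor collide. The difficulty is that plain $C^0$ convergence controls no small-scale behaviour: a conelike singularity concentrates on the scale $\|u_{\Agot_n}-u_{\Agot_0}\|_\infty$, so a naive comparison cannot prevent $|\nabla u_{\Agot_n}|$ from racing to $1$ near a point where $u_{\Agot_0}$ is smooth. The remedy is the a priori interior gradient estimate for spacelike maximal graphs, which bounds $(1-|\nabla u|^2)^{-1/2}$ on an interior ball in terms of the height oscillation alone; applied on fixed balls where the $u_{\Agot_n}$ are genuinely maximal, it forbids the gradient from approaching $1$ in the interior and hence rules out both phenomena. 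Justifying and invoking this estimate (via \cite{Ba}, in the spirit of the singularity analysis of \cite{FLS}) is where the real work lies; the rest is soft compactness together with the uniqueness already established in Claim \ref{cl:uni}.
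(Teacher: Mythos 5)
Your proof is correct, and one half of it coincides with the paper's while the other takes a genuinely different route. The forward implication (uniform convergence forces $\{\Agot_n\}\to\Agot_0$) is essentially the paper's argument: if some $p_j^0$ were avoided by all $\pi_\M(\Agot_n)$ on a fixed ball, Bartnik's interior regularity for limits of maximal graphs would make $u_{\Agot_0}$ smooth and spacelike there, contradicting $u_{\Agot_0}\in\Ggot_\mgot$; a count of points then finishes. For the converse, however, the paper does not argue by compactness and uniqueness: it returns to the construction of $u_\Agot$, introduces the doubly-indexed Dirichlet approximants $u_n^k$ on $\Delta_n^k$ with boundary values $t_i^n$, shows $u_n^k\to u_{\Agot_n}$ as $k\to\infty$ and $u_n^{f(n)}\to u_{\Agot_0}$ as $n\to\infty$, and concludes by a diagonal/triangle-inequality argument, upgrading pointwise to uniform convergence via the common Lipschitz bound. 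Your route instead extracts a uniform subsequential limit $v$ by Ascoli--Arzel\`a, identifies $v\in\Mgot_{\Agot_0}$ by applying the Bartnik limit analysis directly to the $u_{\Agot_n}$ (whose singularities eventually leave every compact subset of $\M-\pi_\M(\Agot_0)$, and whose potential limiting null segments are killed by the spacelike condition on $\Agot_0$), and then invokes the uniqueness of Claim \ref{cl:uni}. This is shorter and more conceptual; its only extra cost is that the structure theorem for limits must be applied to graphs with moving interior singularities rather than to boundary-value approximants --- but the paper already does exactly that in its own forward direction, so nothing new is required. Your closing remark about the interior gradient estimate correctly pinpoints the step the paper leaves implicit in the phrase ``Bartnik's results give that $u_{\Agot_0}$ is smooth and spacelike around $p$.''
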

\begin{proof}
Assume first that $\{u_{\Agot_n}\}_{n\in\n}$ uniformly converges to $u_{\Agot_0}$ in the $C^0$ topology in $\M.$ Up to passing to a subsequence, assume that $\{\pi_\M(\Agot_n)\}_{n\in\n}$ is convergent and let us show that $\{\pi_\M(\Agot_n)\}_{n\in\n}\to \pi_\M(\Agot_0).$ Indeed, suppose for a moment that there exist $p\in\pi_\M(\Agot_0)$ and an open geodesic disc $B$ centered at $p$ such that, up to passing to a subsequence, $\pi_\M(\Agot_n)\cap B=\emptyset$ for all $n\in\n.$ Reasoning as in the paragraph preceding Claim \ref{cl:existence}, Bartnik's results \cite{Ba} give that $u_{\Agot_0}$ is smooth and spacelike around $p,$ a contradiction. Then $\pi_\M(\Agot_0)\subset\lim_{n\to\infty}\pi_\M(\Agot_n).$ Since both sets consists of exactly $\mgot$ points then they agree. Since $\{u_{\Agot_n}\}_{n\in\n}\to u_{\Agot_0}$ and $\{\pi_\M(\Agot_n)\}_{n\in\n}\to \pi_\M(\Agot_0)$ then $\{\Agot_n\}_{n\in\n}\to \Agot_0$ as well.

For the converse assume that $\{\Agot_n\}_{n\in\n}\to \Agot_0.$ For each $(i,n,k)\in\{1,\ldots,\mgot\}\times\n\times\n$ consider an open disc $B_{i,n}^k$ in $\M$ such that $\partial B_{i,n}^k$ is a smooth Jordan curve, $\overline{B_{i,n}^k}\cap\overline{B_{j,n}^k}=\emptyset$ if $i\neq j,$ $\overline{B_{i,n}^{k+1}}\subset B_{i,n}^k,$ $p_i^n\in B_{i,n}^k$ and
\begin{equation}\label{eq:limit}
\text{for any compact $K\subset \M-\{p_1^0,\ldots,p_\mgot^0\}$ there exists $n_0\in\n$ s.t. $K\subset \Delta_n^n$ $\forall n\geq n_0,$}
\end{equation}
where $\Agot_n=\{(p_1^n,t_i^n),\ldots,(p_\mgot^n,t_\mgot^n)\}$ and $\Delta_n^k:=\M-\cup_{i=1}^\mgot B_{i,n}^k.$ Let $u_n^k:\Delta_n^k\to\r$ be a maximal function  satisfying
$u_n^k|_{\partial B_{i,n}^k}=t_i^n.$ (See the discussion preceding Claim \ref{cl:converge}.) By Claims \ref{cl:converge}, \ref{cl:points} and \ref{cl:existence} the sequence 
\begin{equation}\label{eq:limit2}
\text{$\{u_n^k\}_{k\in\n}$ uniformly converges in the $C^0$ topology on $\M$ to $u_{\Agot_n}.$}
\end{equation}
Taking into account \eqref{eq:limit}, a similar argument gives that the sequence 
\begin{equation}\label{eq:limit3}
\text{$\{u_n^{f(n)}\}_{n\in\n}$ uniformly converges in the $C^0$ topology on $\M$ to $u_{\Agot_0}$}
\end{equation}
as well, where $f:\n\to\n$ is any map with $f(n)\geq n$ $\forall n\in\n.$ Fix $p\in \M-\{p_1^0,\ldots,p_\mgot^0\}$ and $\epsilon>0.$ From \eqref{eq:limit2}, for any $n\in\n$ there exists $k_n\in\n$ such that
\begin{equation}\label{eq:limit4}
|u_n^k-u_{\Agot_n}|(p)<\epsilon/2\quad \forall k\geq k_n, 
\end{equation}
where we are assuming that $n$ and $k$ are large enough so that $p\in \Delta_n^k.$ Set $v_n:=u_n^h$ for $h:=\max\{k_n,n\}.$ Then \eqref{eq:limit3} gives $n_0\in\n$ such that
\begin{equation}\label{eq:limit5}
|v_n-u_{\Agot_0}|(p)<\epsilon/2\quad \forall n\geq n_0. 
\end{equation}
Combining \eqref{eq:limit4} and \eqref{eq:limit5} one has that $|u_{\Agot_n}-u_{\Agot_0}|(p)<\epsilon$ $\forall n\geq n_0.$ Since also  $\{\Agot_n\}_{n\in\n}\to \Agot_0,$ we conclude that $\{u_{\Agot_n}\}_{n\in\n}$ simply  converges to $u_{\Agot_0}.$ As $\M$ is compact and 
the $u_{\Agot_n}$ are weakly spacelike, this  convergence is uniform on $\M$ 
 and we are done.
\end{proof}

Consider $\mgot$ different points $\{p_1,\ldots,p_\mgot\}\subset\M$ and take $t_1=\ldots=t_{\mgot-1}\neq t_\mgot\in\r$ such that $\Agot:=\{(p_i,t_i)\}_{i=1,\ldots,\mgot}$ satisfies the spacelike condition. This is nothing but choosing $t_1$ and $t_\mgot$ close enough. By Claim \ref{cl:existence}, $u_{\Agot}$ is well defined and by the maximum principle for maximal surfaces, $u_{\Agot}\in\Ggot_\mgot.$ Hence,

\begin{claim}\label{cl:nonempty}
$\Ggot_\mgot\neq\emptyset$ for any $\mgot\geq 2.$
\end{claim}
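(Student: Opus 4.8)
The plan is to exhibit a single configuration $\Agot$ of $\mgot$ points for which the associated function $u_\Agot$ lands in $\Ggot_\mgot$, i.e. for which all $\mgot$ prescribed points are genuine singularities in the sense that $|\nabla u_\Agot|=1$ there. First I would fix $\mgot$ pairwise distinct points $p_1,\ldots,p_\mgot\in\M$ and prescribe heights $t_1=\cdots=t_{\mgot-1}$ together with a value $t_\mgot\neq t_1$ chosen so close to $t_1$ that the spacelike condition holds. Indeed, between two points carrying the same height the condition is automatic (their height difference is $0$ and their $\M$-distance is positive), while between $p_\mgot$ and each $p_i$ with $i<\mgot$ it reads $|t_\mgot-t_1|<{\rm dist}_\M(p_i,p_\mgot)$, which is guaranteed as soon as $|t_\mgot-t_1|<\min_{i<\mgot}{\rm dist}_\M(p_i,p_\mgot)$. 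Claim \ref{cl:existence} then produces the unique element $u_\Agot\in\Mgot_\Agot$, and it only remains to check the gradient condition at $\pi_\M(\Agot)$.

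The core of the argument is to show $|\nabla u_\Agot|=1$ at every point of $\pi_\M(\Agot)$. Assume without loss of generality that $t_\mgot>t_1$. I would first locate the extrema of $u_\Agot$: since $\hat u=u_\Agot|_{\M-\{p_i\}_{i=1}^\mgot}$ is maximal, the maximum principle for maximal surfaces forbids $\hat u$ from attaining an interior extremum on the connected set $\M-\{p_i\}_{i=1}^\mgot$ (such an extremum would force $u_\Agot$ to be locally, hence globally on that set, constant, contradicting $t_1\neq t_\mgot$). Consequently $\min_\M u_\Agot=t_1$, attained at $p_1,\ldots,p_{\mgot-1}$, and $\max_\M u_\Agot=t_\mgot$, attained at $p_\mgot$.

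Next I would rule out smooth spacelike extension at each prescribed point, invoking the regularity dichotomy of Bartnik \cite{Ba} already used before Claim \ref{cl:existence}: at $p_i$ the function $u_\Agot$ either has $|\nabla u_\Agot|=1$ or extends smoothly and spacelike (so $|\nabla u_\Agot|<1$). Suppose, for instance, that $u_\Agot$ extended smoothly and spacelike across $p_1$. Then $u_\Agot$ would be maximal on the connected open set $\Omega:=\M-\{p_2,\ldots,p_\mgot\}$, where it attains its global minimum $t_1$ at the interior point $p_1$; the strong maximum principle for maximal surfaces would then give $u_\Agot\equiv t_1$ on $\Omega$, whence $u_\Agot(p_\mgot)=t_1\neq t_\mgot$ by continuity, a contradiction. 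The identical argument applied to each $p_i$ with $i<\mgot$ (interior global minimum on $\M-(\pi_\M(\Agot)-\{p_i\})$) and, symmetrically, to $p_\mgot$ (interior global maximum on $\M-\{p_1,\ldots,p_{\mgot-1}\}$) shows that all $\mgot$ points are genuine singularities. Hence $u_\Agot\in\Ggot_\mgot$, and $\Ggot_\mgot\neq\emptyset$.

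The step I expect to be the main obstacle is the last one: upgrading ``$p_i$ is not a smooth spacelike point'' to the quantitative condition $|\nabla u_\Agot|=1$ at \emph{every} $p_i$ simultaneously. This is exactly where Bartnik's dichotomy and the strong maximum principle must be combined, and where it is essential that the two extreme heights $t_1$ and $t_\mgot$ be realized at disjoint prescribed points, so that deleting the remaining singularities still leaves a connected domain on which the corresponding extremum is genuinely interior.
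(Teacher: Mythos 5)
Your proof is correct and follows the same route as the paper: the authors use exactly the configuration $t_1=\cdots=t_{\mgot-1}\neq t_\mgot$ with the heights close enough to satisfy the spacelike condition, and then dispose of the gradient condition with the one-line remark ``by the maximum principle for maximal surfaces, $u_\Agot\in\Ggot_\mgot$.'' Your elaboration of that step --- combining Bartnik's removable-singularity dichotomy with the strong maximum principle on the connected domain obtained by deleting the remaining prescribed points --- is precisely the intended argument, just spelled out.
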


Let $u\in \Ggot_\mgot.$ By definition, a mark in $u$ is an ordering $\Ogot=((q_1,r_1),\ldots,(q_\mgot,r_\mgot))$ of the points in $\Agot,$ where $u=u_{\Agot}.$ Then we say that $(u,\Ogot)$ is a marked function. We denote by $\Ggot_\mgot^*$ the space of marked functions in $\Ggot_\mgot.$ We define the maps
\[
\sgot_1:\Ggot_\mgot^*\to \Ggot_\mgot,\quad \sgot_1(u,\Ogot)=u,
\]
\[
\sgot_2:\Ggot_\mgot^*\to (\M\times\r)^\mgot,\quad \sgot_2(u,\Ogot)=\Ogot.
\]

By Claim \ref{cl:uni}, the map $\sgot_2$ is injective. Moreover,

\begin{claim}\label{cl:open}
$\sgot_2(\Ggot_\mgot^*)$ is an open subset of $(\M\times\r)^\mgot.$
\end{claim}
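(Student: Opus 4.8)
The plan is to reduce the openness of $\sgot_2(\Ggot_\mgot^*)$ to the continuity statement already proved in Claim \ref{cl:uniform}, after first isolating a clean pointwise criterion for a prescribed point to be a genuine singularity. For a configuration $\Agot=\{(p_i,t_i)\}_{i=1}^{\mgot}$ satisfying the spacelike condition, write $\Agot^{(i)}:=\Agot\setminus\{(p_i,t_i)\}$ for the configuration obtained by deleting the $i$-th point. The key observation I would establish is that the singularity of $u_\Agot$ at $p_i$ is removable (i.e.\ $u_\Agot$ extends smoothly and spacelike across $p_i$, so $|\nabla u_\Agot|<1$ there) if and only if $u_{\Agot^{(i)}}(p_i)=t_i$. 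Consequently, membership in $\Ggot_\mgot$, i.e.\ the requirement that \emph{all} prescribed points be genuine singularities, is captured by the $\mgot$ open-looking conditions $u_{\Agot^{(i)}}(p_i)\neq t_i$.

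To prove this criterion I would argue by the uniqueness in Claim \ref{cl:uni}. Since $\Agot^{(i)}\subset\Agot$, it is again spacelike, so $u_{\Agot^{(i)}}$ is well defined, and by the very definition of $\Mgot_{\Agot^{(i)}}$ it is smooth and spacelike on $\M-\{p_j\}_{j\neq i}$, in particular at the point $p_i$ (which is not prescribed for $\Agot^{(i)}$). If the singularity of $u_\Agot$ at $p_i$ is removable, then $u_\Agot$ is maximal on $\M-\{p_j\}_{j\neq i}$ and takes the values $t_j$ at $p_j$ for $j\neq i$, hence $u_\Agot\in\Mgot_{\Agot^{(i)}}$ and $u_\Agot=u_{\Agot^{(i)}}$; evaluating at $p_i$ yields $u_{\Agot^{(i)}}(p_i)=t_i$. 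Conversely, if $u_{\Agot^{(i)}}(p_i)=t_i$, then $u_{\Agot^{(i)}}$ already satisfies all the interpolation conditions defining $\Mgot_\Agot$ and is maximal off $\{p_j\}_{j=1}^{\mgot}$, so $u_\Agot=u_{\Agot^{(i)}}$ is smooth at $p_i$ and the singularity is removable. (When $\mgot=2$ the set $\Agot^{(i)}$ is a single point, $u_{\Agot^{(i)}}$ is the constant furnished by Claim \ref{cl:1}, and the criterion reads simply $t_1\neq t_2$.)

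With the criterion in hand, $\sgot_2(\Ggot_\mgot^*)$ is precisely the set of ordered tuples $\Ogot=((q_1,r_1),\ldots,(q_\mgot,r_\mgot))\in(\M\times\r)^\mgot$ with the $q_i$ pairwise distinct, with the underlying set $\Agot$ spacelike, and with $u_{\Agot^{(i)}}(q_i)\neq r_i$ for every $i$. The first two conditions plainly cut out an open set, so I only need the third to be open within it. Take $\Ogot_n\to\Ogot_0$, where $\Ogot_0$ satisfies all three conditions; for large $n$ the first two conditions persist, so each $u_{\Agot_n^{(i)}}$ is defined. For fixed $i$ one has $\Agot_n^{(i)}\to\Agot_0^{(i)}$, and the converse direction of Claim \ref{cl:uniform} — whose proof only uses the spacelike condition and not membership in $\Ggot_{\mgot-1}$ — gives $u_{\Agot_n^{(i)}}\to u_{\Agot_0^{(i)}}$ uniformly on $\M$. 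Combining this uniform convergence with the Lipschitz continuity of the limit $u_{\Agot_0^{(i)}}$ and the convergence $q_i^n\to q_i^0$ yields $u_{\Agot_n^{(i)}}(q_i^n)\to u_{\Agot_0^{(i)}}(q_i^0)$, while $r_i^n\to r_i^0$. Since $u_{\Agot_0^{(i)}}(q_i^0)-r_i^0\neq 0$ for each of the finitely many $i$, the difference $u_{\Agot_n^{(i)}}(q_i^n)-r_i^n$ stays nonzero for all large $n$ and all $i$; hence $\Ogot_n\in\sgot_2(\Ggot_\mgot^*)$ eventually, so $\Ogot_0$ is an interior point and the set is open.

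The main obstacle is the removability criterion of the second paragraph, and specifically the point that under the spacelike condition the deleted solution $u_{\Agot^{(i)}}$ carries \emph{no} singularity of its own at $p_i$, so that comparing it with $u_\Agot$ through uniqueness is legitimate. This is exactly where the absence of spurious singularities (the vanishing of $\Lambda$ guaranteed by the spacelike condition) and the definition of $\Mgot_{\Agot^{(i)}}$ as smooth maximal off the remaining points are used. Once this is secured, the openness is a soft consequence of Claim \ref{cl:uniform} together with the elementary fact that uniform convergence of functions plus convergence of evaluation points preserves limits of values.
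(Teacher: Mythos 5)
Your proof is correct but takes a genuinely different route from the paper's. The paper argues by contradiction: given $\Ogot_n\to\Ogot$ with $\Ogot_n\notin\sgot_2(\Ggot_\mgot^*),$ each $u_{\Agot_n}$ extends as a spacelike function across one of its marked points; after invoking Claim \ref{cl:uniform} to get $u_{\Agot_n}\to u_{\Agot}$ uniformly, Bartnik's regularity results force the limit to be spacelike at the corresponding point of $\Agot,$ contradicting $u_{\Agot}\in\Ggot_\mgot.$ You instead prove a removability criterion --- the singularity of $u_\Agot$ at $p_i$ is removable if and only if $u_{\Agot^{(i)}}(p_i)=t_i,$ where $\Agot^{(i)}$ deletes the $i$-th point --- via the uniqueness Claim \ref{cl:uni}, and then exhibit $\sgot_2(\Ggot_\mgot^*)$ as cut out by finitely many open conditions whose stability under perturbation follows from Claim \ref{cl:uniform} applied to the $(\mgot-1)$-point configurations $\Agot^{(i)}.$ This buys an explicit description of $\overline{\Ggot}_{\mgot}$ and replaces the delicate step of passing spacelikeness through a uniform limit of maximal functions by a soft evaluation argument; the cost is the extra lemma plus two observations you correctly make: that the relevant direction of Claim \ref{cl:uniform} holds for arbitrary configurations satisfying the spacelike condition, not only those in $\Ggot_\mgot$ (its proof never uses $|\nabla u|=1$ at the marked points, and the paper's own proof of Claim \ref{cl:open} tacitly relies on this same extension), and that the case $\mgot=2,$ where $\Agot^{(i)}$ is a singleton governed by Claim \ref{cl:1}, must be handled separately. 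Note finally that the forward implication of your criterion still uses Bartnik's dichotomy at isolated singularities (removable versus lightlike), the same input the paper needs to make sense of the definition of $\Ggot_\mgot,$ so the reliance on his regularity theory is reduced but not eliminated.
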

\begin{proof}
Let $(u,\Ogot)\in\Ggot_\mgot^*.$ Write $\Ogot=((q_1,r_1),\ldots,(q_\mgot,r_\mgot))$ and $\Agot=\{(q_1,r_1),\ldots,(q_\mgot,r_\mgot)\}.$ Since $u=u_{\Agot}$ then $\Agot$ satisfies the spacelike condition. Reason by contradiction and assume that there exists a sequence $\{\Ogot_n=((q_1^n,r_1^n),\ldots,(q_\mgot^n,r_\mgot^n))\}_{n\in\n}$ converging to $\Ogot$ in the metric topology of $(\M\times\r)^\mgot$ and $\Ogot_n\notin \sgot_2(\Ggot_\mgot^*)$ $\forall n\in\n.$ Write $\Agot_n=\{(q_1^n,r_1^n),\ldots,(q_\mgot^n,r_\mgot^n)\}$ and, up to passing to a subsequence, assume that $\Agot_n$ satisfies the spacelike condition $\forall n\in\n$ (recall that $\Agot$ does so). Write $u_n=u_{\Agot_n},$ $n\in\n.$ By Claim \ref{cl:uniform}, $\{u_n\}_{n\in\n}$ uniformly converges to $u$ in the $C^0$ topology on $\M.$ If, up to passing to a subsequence, $u_n:\M\to\r$ extends as a spacelike function to a point in $\pi_\M(\Agot_n),$ that can be assumed to be $q_1^n$ without loss of generality, $n\in\n,$ then again Bartnik's results \cite{Ba} give that $u$ extends as a spacelike function to $q_1$ as well, a contradiction. Then $u_n\in \Ggot_\mgot$ (recall that $\Agot_n$ consists of $\mgot$ points), hence $(u_n,\Ogot_n)\in \Ggot_\mgot^*$ and $\Ogot_n\in \sgot_2(\Ggot_\mgot^*),$ a contradiction. This proves the claim.
\end{proof}

We set $\overline{\Ggot}_{\mgot}= \sgot_2(\Ggot_\mgot^*).$ 
We can identify  $\Ggot_\mgot^*,$ endowed with the topology induced by the injection $\sgot_2$ into $(\M\times\r)^\mgot,$ 
with $\overline{\Ggot}_{\mgot}.$ The permutation group $\sigma_{\mgot}$ of order $\mgot ,$ acts naturally on $\Ggot_\mgot^*$ as follows:  for $\tau\in\sigma_{\mgot}$ and  $(u,\Ogot)\in\Ggot_\mgot^*$ with $\Ogot=((q_1,r_1),\ldots,(q_\mgot,r_\mgot))$,   we set $\tau . (u,\Ogot) = (u,\tau ( \Ogot))$ where 
$\tau ( \Ogot )= ((q_{\tau(1)},r_{\tau(1)}),\ldots,(q_{\tau(\mgot)},r_{\tau(\mgot)})).$ This action is clearly free and properly discontinuous and the orbit space is naturaly identified to ${\Ggot}_{\mgot}.$ By Claim 
\ref{cl:uniform} the topology induced by the covering map coincides with the topology of $C^0$-uniform convergence of graphs on $\M.$

This completes the proof of Theorem II.


\section{Existence or not of harmonic diffeomorphisms. Proof of Theorem I}\label{sec:thI}

Throughout this section we assume that $\M=(\M,\langle\cdot,\cdot\rangle_\M)$ is of dimension $\ngot=2,$ hence, a compact Riemannian surface without boundary.

Let us recall the following classification of Riemann surfaces. A compact Riemann surface (without boundary) is said to be elliptic. An open Riemann surface is said to be {\em hyperbolic} if it carries non-constant negative subharmonic functions, and it is said to be {\em parabolic} otherwise.
A Riemann surface $\Rcal$ with non-empty boundary is said to be {\em parabolic} if bounded harmonic functions on $\Rcal$ are determined by their boundary values. Otherwise, $\Rcal$ is said to be {\em hyperbolic}. (See \cite{ahlfors,Pe} for a good setting.) For instance, $\Rcal_1=\{z\in\c\;|\; 0<|z|\leq 1\}$ is parabolic whereas $\Rcal_2=\{z\in\c\;|\; \alpha<|z|\leq 1\},$ $\alpha\in(0,1),$ is hyperbolic.
\begin{remark}\label{re:parabolicity}
An open Riemann surface $\Rcal$ is parabolic if and only if $\Rcal-D$ is parabolic for any open relatively compact disc $D\subset\Rcal$ with smooth boundary.

Indeed, if $\Rcal$ is parabolic then, by \cite[$\mathsection$IV.3.3]{FK}, the Dirichlet problem has at most one bounded solution on $\Rcal-D,$ hence $\Rcal-D$ is parabolic as well. For the converse assume that $\Rcal$ is hyperbolic. Then, by \cite[$\mathsection$IV.3.4]{FK}, there exists a harmonic function $w$ on $\Rcal-D$ such that $0<w<1$ on $\Rcal-\overline{D}$ and $w=1$ on $\partial D,$ hence $\Rcal-D$ is hyperbolic and we are done.   
\end{remark}

Let $\mgot\in\n,$ $\mgot\geq 2,$ $u=u_\Agot\in \Ggot_\mgot$ and set $\Omega=\M-\pi_\M(\Agot).$ 

Recall that $X^u:(\Omega,\langle\cdot,\cdot\rangle_u)\to \M\times\r_1$ is a conformal harmonic map. Let $p\in\pi_\M(\Agot)$ and let $A$ be an annular end of $(\Omega,\langle\cdot,\cdot\rangle_u)$ corresponding to $p.$ Then $A$ is conformally equivalent to an annulus $A(r,1):=\{z\in\c\;|\; r<|z|\leq 1\}$ for some $0\leq r<1.$ Identify $A\equiv A(r,1)$ and notice that $u$ extends continuously to $S(r)=\{z\in\c\;|\;|z|=r\}$ with $u|_{S(r)}=u(p).$ By \cite{Ba2}, $X^u(A)$ is tangent to either the upper or the lower light cone at $X^u(p)$ in $\M\times\r_1.$ In particular $p$ is either a strict local minimum or a strict local maximum of $u.$ Then, up to a shrinking of $A,$ we can assume that $u|_{S(1)}$ is constant, where $S(1)=\{z\in\c\;|\;|z|=1\}.$ Since $u|_A$ is harmonic, bounded and non-constant then $r>0$ and $A$ has hyperbolic conformal type. This proves that

\begin{claim}\label{cl:hyperbolic}
$(\Omega,\langle\cdot,\cdot\rangle_u)$ is conformally an open Riemann surface with the same genus as $\M$ and $\mgot$ hyperbolic ends.
\end{claim}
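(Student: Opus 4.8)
```latex
\textbf{Proof proposal for Claim \ref{cl:hyperbolic}.}

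The plan is to analyze the conformal structure of the Riemannian surface $(\Omega,\langle\cdot,\cdot\rangle_u)$ separately at its two types of points: the interior, where $X^u$ is a smooth spacelike maximal graph, and the $\mgot$ ends corresponding to the singular points $p\in\pi_\M(\Agot)$. First I would observe that since $u$ is maximal on $\Omega=\M-\pi_\M(\Agot)$, the conformal metric $\langle\cdot,\cdot\rangle_u$ makes $X^u$ a conformal harmonic immersion (as recorded in Section \ref{sec:preli}), so $(\Omega,\langle\cdot,\cdot\rangle_u)$ carries a well-defined conformal structure and is an open Riemann surface. Topologically $\Omega$ is $\M$ with $\mgot$ points removed, so it has the same genus as $\M$ and exactly $\mgot$ ends, each a punctured-disc neighborhood of one $p\in\pi_\M(\Agot)$. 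The content of the claim is therefore the conformal classification of these ends.

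The central step is to identify each annular end $A$ around a singular point $p$ as a conformal annulus $A(r,1)$ and to force $r>0$. Since $(A,\langle\cdot,\cdot\rangle_u)$ is a Riemannian annular end, by the uniformization of annuli it is conformally $A(r,1)=\{r<|z|\le 1\}$ for some $0\le r<1$; I would fix this identification. The key input is Bartnik's regularity/singularity analysis: at the isolated singular point $p$, the graph $X^u(A)$ is tangent to a light cone at $X^u(p)$, which makes $p$ a strict local extremum of $u$, and $u$ extends continuously to the inner circle $S(r)$ with the constant value $u(p)$. After shrinking $A$ I may also assume $u|_{S(1)}$ is constant. The crucial harmonicity step is this: as a maximal graph function, $u$ is harmonic with respect to $\langle\cdot,\cdot\rangle_u$, and it is bounded and \emph{non-constant} on $A$ (its boundary values $u(p)$ on $S(r)$ differ from its value on $S(1)$, since $p$ is a strict extremum). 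A bounded non-constant harmonic function cannot exist on the parabolic annulus $A(0,1)=\{0<|z|\le 1\}$, because on $A(0,1)$ bounded harmonic functions extend across the puncture and are determined by their outer boundary values alone; hence $r>0$, and $A$ is conformally equivalent to $A(r,1)$ with $r>0$, i.e. of hyperbolic type.

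The main obstacle I anticipate is the rigorous justification that $u$ is genuinely non-constant and bounded on each end and that its inner boundary value is exactly $u(p)$ — in other words, the passage from the abstract singularity to concrete boundary data on $S(r)$. This rests entirely on invoking Bartnik's results correctly: one must know that at the singular point the spacelike maximal graph becomes null (tangent to the light cone), so that $u$ attains a strict local max or min there and extends continuously with value $u(p)$. Once this geometric fact is in hand, the harmonic-function argument on the annulus is routine, using the characterization of parabolic ends (as in Remark \ref{re:parabolicity}): an end is parabolic precisely when bounded harmonic functions are determined by their boundary values, which fails here because a nonconstant bounded harmonic function with constant inner and outer boundary data would be impossible on a parabolic annulus. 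I would therefore spend the bulk of the writeup pinning down the extension of $u$ across $S(r)$ and the strict-extremum property, and treat the conformal-type conclusion as an immediate consequence.
```
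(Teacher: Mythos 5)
Your proposal is correct and follows essentially the same route as the paper: identify each annular end with $A(r,1)$, invoke Bartnik's light-cone tangency result to make $p$ a strict local extremum of $u$ with continuous extension $u|_{S(r)}=u(p)$, shrink so that $u|_{S(1)}$ is constant, and rule out $r=0$ because a bounded harmonic function on the punctured disc would extend across the puncture and be forced constant by its outer boundary data. The only difference is expository: you spell out the removable-singularity/maximum-principle step that the paper compresses into one sentence.
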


In particular, one has the following

\begin{corollary}\label{co:diffeo}
Let $\M$ be a compact Riemannian surface, let $\mgot\geq 2$ and let $\{p_1,\ldots,p_\mgot\}\subset\M.$ Then there exist an open Riemann surface $\Rcal$ and a harmonic diffeomorphism $\phi:\Rcal\to\M-\{p_1,\ldots,p_\mgot\}$ such that every end of $\Rcal$ is of hyperbolic type.
\end{corollary}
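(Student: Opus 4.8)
The plan is to combine the existence statement of Theorem II with the conformal analysis already packaged in Claim \ref{cl:hyperbolic}, taking $\Rcal$ to be the punctured surface $\Omega=\M-\{p_1,\ldots,p_\mgot\}$ equipped with the conformal structure of a maximal graph metric, and taking $\phi$ to be the identity map. All the work has in effect been done; what remains is to name the objects correctly and invoke the conformal invariance of harmonicity.

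First I would manufacture a suitable maximal graph over $\Omega$. Given the prescribed points $\{p_1,\ldots,p_\mgot\}\subset\M$ with $\mgot\geq 2$, the construction in Claim \ref{cl:nonempty} provides heights $t_1,\ldots,t_\mgot\in\r$ (e.g.\ $t_1=\cdots=t_{\mgot-1}$ with $t_\mgot$ taken close enough) for which $\Agot=\{(p_i,t_i)\}_{i=1}^{\mgot}$ satisfies the spacelike condition and the associated function $u:=u_\Agot$ belongs to $\Ggot_\mgot$. In particular $u$ is maximal on $\Omega=\M-\pi_\M(\Agot)=\M-\{p_1,\ldots,p_\mgot\}$, and its graph is a genuine singular graph at each $p_i$.

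Next I would read off the harmonic diffeomorphism. By the observation recorded in Section \ref{sec:preli}, since $u$ is maximal the identity map
\[
{\rm Id}:(\Omega,\langle\cdot,\cdot\rangle_u)\to(\Omega,\langle\cdot,\cdot\rangle_\M)
\]
is a harmonic diffeomorphism. I would therefore define $\Rcal$ to be $\Omega$ endowed with the conformal class of the induced metric $\langle\cdot,\cdot\rangle_u$, regarded as an open Riemann surface, and set $\phi:={\rm Id}:\Rcal\to\M-\{p_1,\ldots,p_\mgot\}$. Since $\phi$ is the identity on the underlying smooth manifold $\Omega$, it is a diffeomorphism; and because the harmonicity of a map out of a Riemann surface depends only on the source conformal structure (the conformal invariance of the energy \eqref{eq:energyintegral}), $\phi$ is harmonic for the complex structure of $\Rcal$.

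Finally, the end behavior is exactly Claim \ref{cl:hyperbolic}: for $u=u_\Agot\in\Ggot_\mgot$, the surface $(\Omega,\langle\cdot,\cdot\rangle_u)$ is conformally an open Riemann surface with $\mgot$ ends, each of hyperbolic conformal type. Hence every end of $\Rcal$ is hyperbolic, which completes the argument. There is no real obstacle here beyond the preceding results; the only points needing a word of care are that heights $t_i$ realizing the spacelike condition exist for the given configuration $\{p_i\}$ (guaranteed by Claim \ref{cl:nonempty}) and that the conformal invariance of harmonicity legitimately transports $\phi$ to the complex structure of $\Rcal$.
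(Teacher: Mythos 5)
Your proposal is correct and follows essentially the same route as the paper: the corollary is stated there as an immediate consequence of Claim \ref{cl:hyperbolic}, with $\Rcal=(\Omega,\langle\cdot,\cdot\rangle_u)$ for $u=u_\Agot\in\Ggot_\mgot$ supplied by Claim \ref{cl:nonempty} and $\phi={\rm Id}$ harmonic by the observation at the end of Section \ref{sec:preli}. Your added remarks on choosing the heights $t_i$ and on the conformal invariance of harmonicity are exactly the (implicit) details the paper relies on.
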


By Koebe's uniformization theorem, any finitely connected planar domain is conformally equivalent to a domain in $\overline{\c}$ whose frontier consists of points and circles. In this setting the corollary above gives Item (i) in Theorem I, that is, one obtains the following existence result for harmonic diffeomorphism between hyperbolic and parabolic domains in $\s^2.$

\begin{corollary}\label{co:sphere}
Let $\mgot\in\n,$ $\mgot\geq 2,$ and let $\{p_1,\ldots,p_\mgot\}\subset\s^2.$ 

Then there exist a circular domain $U$ in $\overline{\C}$ and a harmonic diffeomorphism $\phi:U\to\s^2-\{p_1,\ldots,p_\mgot\}.$
\end{corollary}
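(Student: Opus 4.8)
The plan is to deduce Corollary~\ref{co:sphere} from Corollary~\ref{co:diffeo}: we run the general construction on $\M=\s^2$ and then uniformize the resulting abstract source surface into a concrete circular domain, using the hyperbolicity of its ends to rule out point-type boundary components.

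First I would apply Corollary~\ref{co:diffeo} with $\M=\s^2$ carrying its round metric, which is a compact Riemannian surface without boundary. This yields an open Riemann surface $\Rcal$ and a harmonic diffeomorphism $\phi_1\colon\Rcal\to\s^2-\{p_1,\ldots,p_\mgot\}$ all of whose $\mgot$ ends are of hyperbolic type (Claim~\ref{cl:hyperbolic}). Since $\phi_1$ is a diffeomorphism, $\Rcal$ is diffeomorphic to $\s^2$ with $\mgot$ points removed; in particular it has genus zero and exactly $\mgot$ ends, hence is a finitely connected planar Riemann surface.

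Next I would uniformize. By Koebe's uniformization theorem, recalled just before the statement, $\Rcal$ is conformally equivalent to a domain $U'\subset\overline{\c}$ whose frontier consists of points and circles; let $\iota\colon U'\to\Rcal$ be a biholomorphism. The crucial point is that no frontier component can be a point: an isolated boundary point $z_0$ of $U'$ would have a punctured-disc neighbourhood $\{z : 0<|z-z_0|<\delta\}$ in $U'$, which is a parabolic annular end, forcing the corresponding end of $\Rcal$ to be parabolic and contradicting the hyperbolicity of all ends of $\Rcal$. Hence every frontier component of $U'$ is a circle, so $U:=U'$ is a circular domain in $\overline{\c}$.

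Finally I would set $\phi:=\phi_1\circ\iota\colon U\to\s^2-\{p_1,\ldots,p_\mgot\}$. This is a diffeomorphism, being a composition of a biholomorphism and a diffeomorphism, and it is harmonic because, as noted in Section~\ref{sec:preli}, harmonicity of a map out of a Riemann surface is invariant under conformal reparametrization of the source, so precomposition with the biholomorphism $\iota$ preserves it. This produces the desired harmonic diffeomorphism. The step I expect to be the main obstacle is the uniformization matching in the third paragraph: one must align the conformal type of the ends given by Claim~\ref{cl:hyperbolic} with the frontier structure of the Koebe domain, i.e. check carefully that hyperbolicity of every end excludes exactly the point-type frontier components and leaves only circles; the remaining steps are a routine composition with the harmonic diffeomorphism already produced.
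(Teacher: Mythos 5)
Your proposal is correct and follows essentially the same route as the paper: apply Corollary \ref{co:diffeo} with $\M=\s^2$, invoke Koebe's uniformization of the finitely connected planar surface $\Rcal$, and use the hyperbolicity of the ends from Claim \ref{cl:hyperbolic} to exclude point-type frontier components, leaving a circular domain. The paper states this deduction very tersely; you have merely made explicit the (correct) step that an isolated boundary point would force a parabolic punctured-disc end.
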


{
Let us now show Theorem I-(ii).

The proof of Theorem I-(ii) fundamentally relies on the theory of surfaces of constant Gaussian curvature in Euclidean space. Before going into the details of the proof, let us state the necessary background on this theory.

Let $S$ be a smooth surface and let $X:S\to\r^3$ be an immersion with constant Gauss curvature $K$ equal to $1.$ For convenience we assume that $S$ is simply connected.

Up to changing orientation if necessary, the second fundamental form $II_X$ of $X$ is a positive definite metric. Therefore, $II_X$ induces on $S$ a conformal structure. Denote by $\Scal$ the Riemann surface with underlying differentiable structure $S$ and conformal structure induced by $II_X,$ and let $z=u+\imath v$ be a conformal parameter on $\Scal.$ Then $X$ may be understood as an immersion $X:\Scal\to\r^3$ and, following the results by G\'{a}lvez and Mart\'{i}nez \cite[$\mathsection$2.1]{GM}, the equation $K=1$ implies that 
\begin{equation}\label{eq:CGC}
X_u=N\times N_v \quad\text{and}\quad X_v=N\times N_u,
\end{equation}
where $N:\Scal\to\s^2$ denotes the unit normal vector field of $X.$ It follows that $N:\Scal\to\s^2$ is a harmonic local diffeomorphism.

Conversely, let $N:\Scal\to\s^2$ be a harmonic local diffeomorphism. Then the map $X:\Scal\to\r^3$ given by \eqref{eq:CGC} is an immersion with constant Gauss curvature $K=1$ (see \cite{GM} again and recall that $S$ is assumed to be simply connected).

On the other hand, in terms of the conformal parameter $z=u+\imath v,$ the first, second and third fundamental forms of $X:\Scal\to\r^3$ are given by
\begin{equation}\label{eq:ff}
\left\{
\begin{array}{rcc}
I_X=\langle dX,dX\rangle_{\r^3} & = & Qdz^2+2\mu|dz|^2+\overline{Q}d\overline{z}^2\\
II_X=\langle dX,dN\rangle_{\r^3} & = & 2\rho|dz|^2\\
III_X=\langle dN,dN\rangle_{\r^3} & = & -Qdz^2+2\mu|dz|^2-\overline{Q}d\overline{z}^2,
\end{array}
\right.
\end{equation}
where $\langle\cdot,\cdot\rangle_{\r^3}$ denotes the Euclidean metric in $\r^3,$ $Qdz^2$ is a holomorphic quadratic differential on $\Scal,$ and $\mu$ and $\rho$ are smooth positive real functions on $\Scal,$ see \cite{GHM}. Then, as Klotz pointed out in \cite[Remark 1]{Kl}, there exists an immersion $Y:\Scal\to\r^3$ achieving $III_X$ as its first fundamental form, $II_X$ as its second and $I_X$ as its third ones (recall that $S$ is simply connected and observe that $III_X$ is a positive definite metric). Since $X:\Scal\to\r^3$ is of constant Gauss curvature $K=1,$ it trivially follows from \eqref{eq:ff} that so is $Y:\Scal\to\r^3.$

Now we can prove Theorem I-(ii).
\begin{theorem}\label{pro:1}
There exists no harmonic diffeomorphism $\phi:\d\to\s^2-\{p\},$ $p\in\s^2.$ 
\end{theorem}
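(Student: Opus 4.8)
The plan is to argue by contradiction and to convert the hypothetical harmonic diffeomorphism into a rigidity statement about surfaces of constant Gaussian curvature $1$ in $\r^3$, using the material recalled above. Suppose $\phi:\d\to\s^2-\{p\}$ is a harmonic diffeomorphism, and regard $\d$ as the simply connected Riemann surface $\Scal$ carrying its standard, hence hyperbolic, conformal structure. Since $\phi$ is a harmonic local diffeomorphism into $\s^2$, the converse part of the G\'{a}lvez--Mart\'{i}nez correspondence produces, via \eqref{eq:CGC}, an immersion $X:\Scal\to\r^3$ with $K\equiv1$ and Gauss map $N=\phi$, for which the conformal structure of $\Scal$ is precisely the one induced by $II_X$; in the notation of \eqref{eq:ff} the coordinate $z$ is conformal for $II_X=2\rho|dz|^2$ and $Q\,dz^2$ is holomorphic. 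Because $\phi$ is a diffeomorphism onto $\s^2-\{p\}$, the third fundamental form $III_X=\langle dN,dN\rangle_{\r^3}=\phi^*\langle\cdot,\cdot\rangle_{\s^2}$ is \emph{isometric} to the round metric of $\s^2$ restricted to $\s^2-\{p\}$.

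I would then bring in the dual immersion. By Klotz's remark \cite{Kl} there is an immersion $Y:\Scal\to\r^3$, again of constant curvature $K\equiv1$, with $I_Y=III_X,$ $II_Y=II_X$ and $III_Y=I_X.$ Thus $Y$ realizes in $\r^3$ the round once-punctured sphere $(\s^2-\{p\},\langle\cdot,\cdot\rangle_{\s^2})$ as its induced metric. The point of passing from $X$ to $Y$ is that this metric extends smoothly, and completely, across $p$ to the round metric on all of $\s^2.$

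The heart of the argument, and the step I expect to be the main obstacle, is to show that $Y$ extends across the puncture to a closed convex immersion of $\s^2$ in $\r^3.$ Granting the requisite control of the second fundamental form as $z$ tends to the ideal boundary of $\d$ (equivalently, of the principal curvatures of $Y,$ which are reciprocal since $K\equiv1$), the isolated singularity of $Y$ at $p$ is removable: the immersion extends smoothly to a closed surface of constant curvature $1,$ which by Liebmann's theorem must be a round sphere. In particular $Y$ is totally umbilic, so $II_Y\propto I_Y;$ comparing with \eqref{eq:ff} this forces the holomorphic quadratic differential $Q\,dz^2$ to vanish identically. Establishing this curvature control near the ideal boundary, so that the removable-singularity and rigidity step applies, is the delicate part of the proof.

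Once $Q\equiv0,$ the three fundamental forms in \eqref{eq:ff} are mutually conformal, so the conformal structure induced by $II_X$ coincides with that induced by $III_X.$ But $III_X$ is the round metric of $\s^2-\{p\},$ whose conformal type is that of the Riemann sphere with one point removed, namely $\c,$ which is parabolic. Hence the conformal structure of $\Scal=\d$ would be parabolic, contradicting the fact that $\d$ is hyperbolic, and this contradiction shows that no such $\phi$ exists. Alternatively, the same parabolicity conclusion can be sought by comparing the conformal structures of $II_X$ and $III_X$ directly: they differ by the Beltrami coefficient determined by $Q/\mu,$ so if this quantity were uniformly bounded below $1$ the identity map would be quasiconformal and would transport the parabolic type of $III_X$ onto $II_X$ (recall $\d$ and $\c$ are not quasiconformally equivalent); the difficulty is again the uniform control near the ideal boundary.
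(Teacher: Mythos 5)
Your strategy is the paper's strategy -- pass from $\phi$ to a constant-curvature-one immersion $X$ with Gauss map $\phi$ via G\'alvez--Mart\'inez, dualize via Klotz to get $Y$ with $I_Y=III_X=\phi^*\langle\cdot,\cdot\rangle_{\s^2}$, conclude that $Y$ parametrizes a round sphere, and read off that the conformal type of $II_X$ is that of $\c$ rather than $\d$. But the step you flag as ``the main obstacle'' is a genuine gap as written, and it is exactly the step the paper closes by a citation rather than by the analysis you propose. You try to extend $Y$ across the puncture by controlling the second fundamental form (equivalently the principal curvatures) of $Y$ near the ideal boundary of $\d$, then invoke a removable-singularity argument plus Liebmann; you give no mechanism for obtaining that control, and your alternative quasiconformal route founders on the same missing estimate. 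No such control is needed. Since $\phi$ is a \emph{global} diffeomorphism onto $\s^2-\{p\}$ and $I_Y=III_X=\phi^*\langle\cdot,\cdot\rangle_{\s^2}$, the map $\phi^{-1}:\s^2-\{p\}\to(\Scal,I_Y)$ is an isometry, so $Y\circ\phi^{-1}:\s^2-\{p\}\to\r^3$ is an isometric immersion of the once-punctured \emph{round} sphere. Pogorelov's rigidity theorem (\cite[p.~419]{Po}) states that such an immersion is the restriction to $\s^2-\{p\}\subset\r^3$ of a rigid motion of $\r^3$ -- with no a priori hypothesis on the behaviour of the second fundamental form near the puncture. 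This immediately gives that $Y(\Scal)$ is a once-punctured round sphere, hence $II_Y$ is proportional to $I_Y$ and the conformal structure induced by $II_Y=II_X$ is that of $\c$, contradicting hyperbolicity of $\d$.

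Two smaller points. First, the intrinsic rigidity input is indispensable: Liebmann's theorem applies only after you already have a closed surface, so deriving $Q\equiv 0$ from it presupposes the extension you cannot yet justify; Pogorelov's theorem is the correct tool precisely because it is a statement about the punctured sphere. Second, when you invoke the G\'alvez--Mart\'inez converse you should allow for replacing $\phi$ by $-\phi$ so that $II_X$ is positive definite, as the paper does; this is harmless but necessary for the setup of \eqref{eq:CGC} and \eqref{eq:ff}.
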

\begin{proof}
Let $\Scal$ be a simply connected Riemann surface and let $\varphi:\Scal\to\s^2-\{p\}$ be a harmonic diffeomorphism. To finish it suffices to check that $\Scal$ is conformally equivalent to the complex plane $\c.$

By \cite{GM}, since $\varphi:\Scal\to\s^2-\{p\}$ is a harmonic (local) diffeomorphism, then, up to replacing $\varphi$ by $-\varphi$ if necessary, there exists an immersion $X:\Scal\to\r^3$ with Gauss map $\varphi,$ constant curvature $K_X=1$ and such that 
the conformal structure of $\Scal$ is the one induced by the second fundamental form of $X,$ $II_X.$

Denote by $I_X$ and $III_X$ the first and third fundamental forms of $X,$ respectively. By \cite{Kl} there exists another immersion $Y:\Scal\to\r^3$ with constant curvature $K_Y=1,$ and such that
the first, second and third fundamental forms of $Y$ are given by $I_Y=III_X,$ $II_Y=II_X$ and $III_Y=I_X,$ respectively, and
\begin{equation}\label{eq:IIyIIx}
\text{the conformal structure of $\Scal$ is the one induced by $II_Y=II_X.$}
\end{equation}

Since $\varphi:\Scal\to\s^2-\{p\}$ is a diffeomorphism and $I_Y=III_X=\langle d\varphi,d\varphi\rangle_{\r^3}=\varphi^*(\langle \cdot,\cdot\rangle_{\s^2})$ (here $\langle\cdot,\cdot\rangle_{\s^2}$ denotes the canonical metric in $\s^2$), then $\varphi^{-1}:\s^2-\{p\}\to (\Scal,I_Y)$ is an isometry.
Since obviously $Y:(\Scal,I_Y)\to\r^3$ is an isometric immersion, then 
\[
\text{$Y\circ\varphi^{-1}:\s^2-\{p\}\to\r^3$ is an isometric immersion}
\]
as well. Following \cite[p. 419]{Po}, $Y\circ\varphi^{-1}$ is the restriction to $\s^2-\{p\}\subset\r^3$ of a rigid motion of $\r^3.$ In particular, $Y(\Scal)\subset\r^3$ is a once punctured round sphere. Therefore, the conformal structure induced on $\Scal$ by $II_Y=II_X$ is that of $\c.$ This and \eqref{eq:IIyIIx} conclude the proof. 
\end{proof}
}
\begin{remark}
Lemaire \cite{Le} showed that if a harmonic map $\varphi:\d\to N$ with finite energy satisfies that $\varphi|_{\s^1}$ is constant then $\varphi$ is constant as well, where $N$ is an arbitrary Riemannian manifold. The above theorem particularly shows that the condition on the energy of $\varphi$ can be removed if $\varphi$ is a diffeomorphism and $N=\s^2.$
\end{remark}

Finally Theorem I-(iii) is a very special instance of the following

\begin{proposition}\label{pro:converse}
Let $\Rcal$ be a parabolic open Riemann surface, let $N$ be an oriented Riemannian surface  and let $\phi:\Rcal\to N$ be a harmonic local diffeomorphism. Suppose either that $N$ has   Gaussian curvature  $K_N> 0$ or that  $K_N\geq 0$ and $N$ has no flat open subset.  

Then $\phi$ is either holomorphic or antiholomorphic.
\end{proposition}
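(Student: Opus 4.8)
The plan is to study the Hopf differential of $\phi$ and to show that it vanishes identically, which forces $\phi$ to be holomorphic or antiholomorphic. Fix a local conformal coordinate $z$ on $\Rcal$ and a local conformal coordinate $w$ on $N$, with metric $\rho^2|dw|^2$, and write $\phi$ as $w=w(z,\bar z)$. I would introduce the holomorphic and antiholomorphic energy densities $\mathcal{H}=|\partial\phi|^2$ and $\mathcal{L}=|\bar\partial\phi|^2$ (measured with respect to a fixed conformal metric $g$ on $\Rcal$ and the metric of $N$), and the Hopf differential $\Phi=\rho^2\,w_z\,\overline{w_{\bar z}}\,dz^2$, a section of the square of the canonical bundle. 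The harmonicity of $\phi$ is precisely the statement that $\Phi$ is a \emph{holomorphic} quadratic differential, so either $\Phi\equiv 0$ or its zeros are isolated. Since $\phi$ is a local diffeomorphism, its Jacobian $\mathcal{H}-\mathcal{L}$ never vanishes and, $\Rcal$ being connected, it has constant sign; I treat the orientation-preserving case $\mathcal{H}>\mathcal{L}\geq 0$ (the reversed case is symmetric and yields antiholomorphicity). Note that $|\Phi|^2=\mathcal{H}\mathcal{L}$ with $\mathcal{H}>0$, so $\Phi\equiv 0$ is equivalent to $\mathcal{L}\equiv 0$, i.e. to $w_{\bar z}\equiv 0$, i.e. to $\phi$ being holomorphic. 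Hence it suffices to rule out $\Phi\not\equiv 0$.

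The analytic input is the Bochner-type identity for harmonic maps between surfaces: on the open set where $\mathcal{L}>0$,
\[
\Delta_\Rcal \log\frac{\mathcal{L}}{\mathcal{H}}=4\,K_N\,(\mathcal{H}-\mathcal{L}),
\]
where $\Delta_\Rcal$ is the Laplacian of $g$. This is the difference of the two classical formulas $\Delta_\Rcal\log\mathcal{H}=2K_\Rcal-2K_N(\mathcal{H}-\mathcal{L})$ and $\Delta_\Rcal\log\mathcal{L}=2K_\Rcal+2K_N(\mathcal{H}-\mathcal{L})$; the ratio $\mathcal{L}/\mathcal{H}=|w_{\bar z}|^2/|w_z|^2$ is intrinsic, so the left-hand side is conformally well defined on $\Rcal$, which is all that parabolicity sees. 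Assuming $\Phi\not\equiv 0$, I set $u=\log(\mathcal{L}/\mathcal{H})$. Because $\mathcal{H}>\mathcal{L}$ we have $u<0$, and because $K_N\geq 0$ and $\mathcal{H}-\mathcal{L}>0$ the displayed identity shows that $u$ is subharmonic wherever $\mathcal{L}>0$. At the isolated zeros of $\Phi$ (where $\mathcal{L}=0$) one has $u\to-\infty$; being bounded above and subharmonic off a discrete set, $u$ extends to a genuine subharmonic function on all of $\Rcal$, still with $u<0$.

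Now parabolicity enters through its Liouville property: a parabolic open Riemann surface carries no non-constant negative subharmonic function, so $u$ must be constant, say $u\equiv c\in[-\infty,0)$. If $c=-\infty$ then $\mathcal{L}\equiv 0$, contradicting $\Phi\not\equiv 0$. If $c$ is finite then $\mathcal{L}>0$ everywhere, $u$ is smooth with $\Delta_\Rcal u\equiv 0$, and the identity forces $K_N\,(\mathcal{H}-\mathcal{L})\equiv 0$, hence $K_N\circ\phi\equiv 0$ on $\Rcal$ since $\mathcal{H}-\mathcal{L}>0$. When $K_N>0$ this is absurd. When only $K_N\geq 0$ is assumed, the openness of the local diffeomorphism $\phi$ makes $\phi(\Rcal)$ a non-empty open subset of $N$ on which $K_N$ vanishes, contradicting the hypothesis that $N$ has no flat open subset. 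In either case we reach a contradiction, so $\Phi\equiv 0$ and $\phi$ is holomorphic; the orientation-reversing case gives antiholomorphic via $\log(\mathcal{H}/\mathcal{L})$.

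The main obstacle is twofold. First, one must pin down the Bochner identity with the correct sign, so that the natural candidate $\log(\mathcal{L}/\mathcal{H})$ is subharmonic \emph{and} bounded above (rather than superharmonic), which is exactly what lets the parabolicity hypothesis bite. Second, one must handle the degenerate points, namely the zeros of the Hopf differential where $\phi$ is instantaneously conformal and $u=-\infty$, so as to produce a bona fide subharmonic function on the entire surface. It is worth emphasizing that the curvature hypothesis is used twice: the weak inequality $K_N\geq 0$ gives subharmonicity, while strict positivity (or non-flatness together with the openness of $\phi$) is what upgrades the forced constancy of $u$ into the vanishing of $\Phi$.
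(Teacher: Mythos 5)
Your proposal is correct and follows essentially the same route as the paper: the same partial energy densities, the same Bochner identity $\Delta_{\Rcal}\log\frac{|\overline{\partial}\phi|}{|\partial\phi|}=2K_N J(\phi)$ (yours is the squared version, hence the factor $4$), and the same use of the Liouville property of parabolic surfaces to force constancy and then $K_N J(\phi)\equiv 0$. The only cosmetic difference is at the zeros of the Hopf differential: the paper deletes them and invokes the fact that removing isolated points preserves parabolicity (its Claim \ref{cl:parabolicity}), whereas you extend $\log(\mathcal{L}/\mathcal{H})$ across them as a subharmonic function with value $-\infty$; both devices are standard and equivalent here.
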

\begin{proof}
Assume for instance that $\phi$ preserves orientation and let us check that $\phi$ is holomorphic.
Let $z$ (resp. $\phi$)  be a local conformal parameter in $\Rcal$ (resp. in $N$). The metric on $N$
writes $\rho(\phi) |d\phi|^2.$ A conformal metric on  $\Rcal$ writes $\lambda(z)|dz|^2.$  Following \cite{SY} we consider the following partial energy densities on  $\Rcal$:

\begin{equation} |\partial \phi|^2 = \frac{\rho(\phi(z)) }{\lambda(z)}\left| \frac{\partial \phi}{\partial z}\right|^2 \, \, 
 \text{and }\,\,   |\overline{\partial} \phi|^2 = \frac{\rho(\phi(z)) }{\lambda(z)}\left| \frac{\partial \phi}{{\partial} \bar z}\right|^2.
\end{equation}

Denote by $J(\phi)$ the Jacobian of $\phi.$ We have $J(\phi)= |\partial \phi|^2-  |\overline{\partial} \phi|^2.$ By our hypothesis $J(\phi)>0,$ that is, $ |\partial \phi|>  |\overline{\partial} \phi|.$ 

Reason by contradiction and assume that $\phi$ is not holomorphic, that is to say, $ |\overline{\partial} \phi| $ is not identically zero on $\Rcal.$ In this case, its zeroes are isolated \cite{SY}. Set $\Rcal^*:=\Rcal-\{|\overline{\partial} \phi| =0\}.$
We have 

\begin{equation}\label{eq:negative}
\log\frac {|\overline{\partial} \phi|}{|{\partial} \phi|}<0\quad\text{on $\Rcal^*.$}
\end{equation}

By the Bochner formula   (see again \cite[Chapter 1, \textsection7]{SY}):


\begin{equation}\label{eq:sub}
\Delta_{\Rcal}\log\frac{|\overline{\partial} \phi|}{|{\partial} \phi|}=2K_N J(\phi).
\end{equation}

Now note that the parabolicity of $\Rcal$ implies that of $\Rcal^*$ (see Claim \ref{cl:parabolicity} below). Suppose $K_N>0.$ By equations \eqref{eq:negative} and \eqref{eq:sub}, $\log\frac{|\overline{\partial} \phi|}{|{\partial} \phi|}$ is a  non-constant negative  subharmonic function on the parabolic surface $\Rcal^*,$ which is a contradiction.  Suppose now that $K_N\geq 0.$ Again, by the equations  \eqref{eq:negative} and \eqref{eq:sub}, the function $\log\frac{|\overline{\partial} \phi|}{|{\partial} \phi|}$ is subharmonic and hence constant. From  \eqref{eq:sub} we get $K_N J(\phi)\equiv  0.$ Since $J(\phi) > 0$ , we conclude that $K_N \equiv 0$ on the open set $\phi(\Rcal^*),$ which contradicts our hypothesis. 

In the case when $\phi$ reverses orientation then a parallel argument gives that $\phi$ is antiholomorphic. This concludes the proof.
\end{proof}

Since in the setting of Theorem I-(iii) the domains $\overline{\c}-\{z_1,\ldots,z_\mgot\}$ and $\s^2-\cup_{j=1}^\mgot D_j$ are not conformally equivalents, then the result holds.

For the lack of a reference, we now prove the following well known fact needed  in the proof of Proposition \ref{pro:converse}.

\begin{claim}\label{cl:parabolicity}
Let $\Rcal$ be an open parabolic Riemann surface and let $E\subset \Rcal$ be a closed subset consisting of isolated points.

Then $\Rcal^*:=\Rcal-E$ is an open parabolic Riemann surface.
\end{claim}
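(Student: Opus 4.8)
The plan is to reduce the statement about removing a closed set of isolated points to the single-point case, and then to the local structure near a puncture. The claim to prove is that if $\Rcal$ is an open parabolic Riemann surface and $E\subset\Rcal$ is a closed discrete subset, then $\Rcal^*=\Rcal-E$ remains parabolic. I would first recall the characterization of parabolicity that is most convenient to test here. Since $\Rcal$ is parabolic, by Remark \ref{re:parabolicity} removing a relatively compact disc preserves parabolicity, and conversely; the essential content is that parabolicity is a property of the ideal boundary (the ends) and is insensitive to the removal of ``small'' (capacity zero) sets. The single puncture is the prototype of such a small set, since a once-punctured disc $\{0<|z|\le 1\}$ is itself parabolic.

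The key steps, in order, would be as follows. First I would reduce to removing a single point, or rather observe that the difficulty is purely local at each point of $E$: away from $E$ nothing changes, and $E$ being closed and discrete means each point of $E$ has a coordinate neighbourhood biholomorphic to a disc $\D$ meeting $E$ in only its centre. Second, I would use the operational definition of parabolicity for surfaces with boundary given in the excerpt: $\Rcal^*$ is parabolic if every bounded harmonic function on $\Rcal^*$ is determined by its boundary values; equivalently (via Remark \ref{re:parabolicity} applied to $\Rcal^*$), it suffices to understand the harmonic measure of the ideal boundary. The crucial analytic input is that a point has zero capacity: a bounded harmonic function defined on a punctured disc $\D-\{0\}$ extends harmonically across the puncture, because an isolated singularity of a bounded harmonic function is removable. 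Third, given a bounded harmonic function $h$ on $\Rcal^*$, I would extend it across each point of $E$ by this removability to obtain a bounded harmonic function $\tilde h$ on $\Rcal$; since $\Rcal$ is parabolic, $\tilde h$ is determined by its boundary values on $\Rcal$, and the boundary of $\Rcal$ coincides with the ideal boundary of $\Rcal^*$ (the added points of $E$ are interior, not boundary). This forces $h=\tilde h$ to be determined by its boundary values, giving parabolicity of $\Rcal^*$.

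Concretely, I would argue by contradiction in the style of Remark \ref{re:parabolicity}. Suppose $\Rcal^*$ is hyperbolic. Then, fixing a relatively compact disc $D\subset\Rcal^*$ with smooth boundary, there is a harmonic function $w$ on $\Rcal^*-D$ with $0<w<1$ and $w=1$ on $\partial D$, witnessing hyperbolicity. Because $E$ is closed and discrete and each of its points is an isolated singularity at which $w$ is bounded, $w$ extends harmonically across every point of $E$, yielding a bounded harmonic function on $\Rcal-D$ with the same boundary behaviour. This would witness hyperbolicity of $\Rcal$, contradicting the hypothesis. Hence $\Rcal^*$ is parabolic.

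The main obstacle, and the step deserving the most care, is the removability of the isolated singularities: one must verify that a bounded harmonic function on a punctured disc genuinely extends harmonically across the puncture, and that this extension is compatible with the global structure so that the boundary data used to invoke parabolicity of $\Rcal$ is exactly the boundary data of $\Rcal^*$ (i.e.\ that adjoining $E$ changes only the interior and not the ideal boundary). The removability itself is classical: writing $w$ near a puncture as a bounded harmonic function on $\{0<|z|<1\}$, its conjugate-differential period around the puncture must vanish by boundedness, so $w$ has a single-valued harmonic conjugate and hence extends, the singularity being removable exactly as for bounded holomorphic functions. I would also note, if $E$ is finite the reduction is immediate by induction on $|E|$, removing one point at a time; for infinite but discrete and closed $E$ the local argument at each point combined with the fact that any compact exhaustion of $\Rcal^*$ meets only finitely many points of $E$ lets the finite argument propagate to the limit without new difficulty.
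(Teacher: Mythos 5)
Your proof is correct and follows essentially the same route as the paper's: both remove a relatively compact disc $D$, reduce via Remark \ref{re:parabolicity} to the surface-with-boundary $\Rcal^*-D$, extend a bounded harmonic function across the isolated points of $E$ by removability of bounded harmonic singularities, and conclude from the parabolicity of $\Rcal-D$. The only cosmetic difference is that you phrase the argument as a contradiction using a witness $w$ of hyperbolicity, while the paper argues directly with a bounded harmonic function vanishing on $\partial(\Rcal^*-D)$.
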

\begin{proof}
The fact that $\Rcal^*$ is an open Riemann surface is evident. Let us show that it is parabolic. Indeed, consider $B$ an open relatively compact disc in $\Rcal^*$ with smooth boundary and denote by $\Ncal$ the Riemann surface with boundary $\Ncal:=\Rcal^*-B.$ To finish it is suffices to prove that $\Ncal$ is parabolic (see Remark \ref{re:parabolicity}). Let $u:\Ncal\to\r$ be a non-constant bounded harmonic function with $u|_{\partial \Ncal}=0.$ Since $E$ consists of isolated points then $u$ extends harmonically to $\Ncal\cup E=\Rcal-B.$ Since $\Rcal-B$ is parabolic (see Remark \ref{re:parabolicity} again), $\partial (\Rcal-B)=\partial \Ncal$ and $u|_{\partial \Ncal}=0,$ then $u$ is identically zero on $\Ncal\subset \Rcal-B.$ This proves that $\Ncal$ is parabolic and we are done.
\end{proof}

This completes the proof of Theorem I. 

\section{Maximal graphs and harmonic diffeomorphisms between surfaces}\label{sec:maximal}

Let $\Rcal$ be a Riemann surface and let $N$ be a Riemannian surface. A map $X=(f,h):\Rcal\to N\times\R_1$ is conformal if and only if
\begin{equation}\label{eq:confo}
\left|\frac{\partial f}{\partial x}\right|^2 - \left(\frac{\partial h}{\partial x}\right)^2 = \left|\frac{\partial f}{\partial y}\right|^2 - \left(\frac{\partial h}{\partial y}\right)^2\quad\text{and}\quad
\left\langle \frac{\partial f}{\partial x}\,,\, \frac{\partial f}{\partial y} \right\rangle = \frac{\partial h}{\partial x} \frac{\partial h}{\partial y},
\end{equation}
where $z=x+\imath y$ is a local conformal parameter on $\Rcal$ and $|\cdot|$ and $\langle\,,\,\rangle$ denote the norm and metric on $N,$ repectively. If in addition $X$ is harmonic then the above equalities hold if and only if the Hopf differential of $f:\Rcal\to N,$
\[
\Phi_f= \left\langle \frac{\partial f}{\partial z}\,,\, \frac{\partial f}{\partial z} \right\rangle dz^2 = \frac14 \left( \left|\frac{\partial f}{\partial x}\right|^2 - \left|\frac{\partial f}{\partial y}\right|^2 -2\imath \left\langle\frac{\partial f}{\partial x}\,,\, \frac{\partial f}{\partial y} \right\rangle \right)dz^2,
\]
and the one of $h:\Rcal\to\r,$
\[
\Phi_h= \left(\frac{\partial h}{\partial z}\right)^2 dz^2 = \frac14 \left( \left(\frac{\partial h}{\partial x}\right)^2 - \left(\frac{\partial h}{\partial y}\right)^2 -2\imath \frac{\partial h}{\partial x}\frac{\partial h}{\partial y} \right)dz^2,
\]
agree.

Furthermore, a conformal harmonic immersion $X$ is spacelike (hence, a conformal maximal immersion) if and only if
\begin{equation}\label{eq:spa}
\left|\frac{\partial f}{\partial x}\right|^2 > \left(\frac{\partial h}{\partial x}\right)^2.
\end{equation}

On the other hand, let $\phi:\Rcal\to N$ be a harmonic map and denote by $\Phi_\phi$ its Hopf differential. Consider $(\widetilde{\Rcal},\Pi)$ a $2$-sheeted covering of $\Rcal$ such that $\widetilde{\Phi}_\phi:=\Phi_\phi\circ\Pi$ has a well defined square root, and write $\widetilde{\Phi}_\phi=(\widetilde{\varphi}(z)dz)^2$ on a local conformal parameter $z=x+\imath y$ on $\widetilde{\Rcal}.$ Observe that $(\widetilde{\Rcal},\Pi)$ is possibly branched  at the zeros of $\Phi_\phi.$

Consider now $(\widehat{\Rcal},\widehat{\Pi})$ a covering of $\widetilde{\Rcal}$ such that $\widehat{\varphi}:=\widetilde{\varphi}\circ\widehat{\Pi}$ has no real periods, and define
\[
X_\phi:\widehat{\Rcal}\to N\times\r_1,\quad X_\phi=(f_\phi, h_\phi),
\]
where 
\[
f_\phi:=\phi\circ\widetilde{\Pi}\circ\widehat{\Pi}\quad\text{and}\quad h_\phi:=\Re\int\widehat{\varphi}dz.
\]
Notice that $(\widehat{\Rcal},\widehat{\Pi})$ is infinitely sheeted unless $\widetilde{\varphi}$ has no real periods (recall that the periods are additive). 

Clearly, the Hopf differentials of $f_\phi$ and $h_\phi$ agree, so the above discussion gives that $X_\phi$ is a conformal harmonic map. 
Assume in addition that $\phi,$ and so $f_\phi,$ is a local harmonic diffeomorphism. From \eqref{eq:confo} and Cauchy-Schwarz inequality one has
\[
\left|\frac{\partial f_\phi}{\partial x}\right|\geq \left|\frac{\partial h_\phi}{\partial x}\right|.
\]
Assume the equality holds at a point $p\in\widehat{\Rcal}.$ Then \eqref{eq:confo} gives that, at the point $p,$ $|\partial f_\phi/\partial y|= |\partial h_\phi/\partial y|$ as well and $|\langle \partial f_\phi/\partial x\,,\, \partial f_\phi/\partial y\rangle| = |\partial f_\phi/\partial x|\cdot|\partial f_\phi/\partial y|.$ This contradicts that $f_\phi$ is a local diffeomorphism. Therefore $|\partial f_\phi/\partial x|>|\partial h_\phi/\partial x|$ on $\widehat{\Rcal}$ and $X_\phi:\widehat{\Rcal}\to\Ncal\times\r_1$ is a possibly branched conformal maximal immersion (see \eqref{eq:spa}).

In  this way we have showed the following

\begin{proposition}\label{pro:covering}
Let $\Rcal$ be a Riemann surface, let $N$ be Riemanninan surface and let $\phi:\Rcal\to N$ be a local harmonic diffeomorphism.

Then there exist a covering $(\widehat{\Rcal},\Pi)$ of $\Rcal$ and a possibly branched conformal maximal immersion $X_\phi=(f_\phi,h_\phi):\Rcal\to N\times\r_1$ such that $f_\phi=\phi\circ\Pi:\widehat{\Rcal}\to N.$ 
\end{proposition}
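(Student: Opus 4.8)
The plan is to assemble Proposition \ref{pro:covering} directly from the construction carried out in the paragraphs immediately preceding its statement, which already does essentially all the work. First I would fix a local harmonic diffeomorphism $\phi:\Rcal\to N$ and form its Hopf differential $\Phi_\phi,$ a holomorphic quadratic differential on $\Rcal.$ The first task is to pass to a cover on which $\Phi_\phi$ admits a global square root $\widetilde\varphi\,dz$: I would take the $2$-sheeted cover $(\widetilde\Rcal,\widetilde\Pi)$ determined by the sign ambiguity of the square root, noting that it may be branched precisely over the zeros of $\Phi_\phi,$ so that $\widetilde\Phi_\phi=\Phi_\phi\circ\widetilde\Pi$ does have a well-defined square root $\widetilde\varphi\,dz.$ The second task is to kill the real periods of this square root so that its integral defines a single-valued real height function; for this I would pass to the further cover $(\widehat\Rcal,\widehat\Pi)$ of $\widetilde\Rcal$ associated to the period homomorphism, on which $\widehat\varphi=\widetilde\varphi\circ\widehat\Pi$ has no real periods, and set $\Pi:=\widetilde\Pi\circ\widehat\Pi.$

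With the cover in hand I would define $X_\phi=(f_\phi,h_\phi)$ by $f_\phi=\phi\circ\Pi$ and $h_\phi=\Re\int\widehat\varphi\,dz,$ the latter being well defined precisely because the real periods have been removed. The crux is then to verify that $X_\phi$ is a conformal maximal immersion. Conformality and harmonicity follow from the criterion recorded before the statement: since $f_\phi$ and $h_\phi$ have, by construction, the same Hopf differential $\widehat\varphi^2\,dz^2$ (the Hopf differential of $f_\phi$ being the pullback of $\Phi_\phi,$ and that of $h_\phi$ being $(\partial h_\phi/\partial z)^2\,dz^2=\widehat\varphi^2\,dz^2$), equations \eqref{eq:confo} hold and $X_\phi$ is a conformal harmonic map. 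The spacelike (maximality) condition \eqref{eq:spa} is the main point: here I would use that $\phi,$ and hence $f_\phi,$ is a local diffeomorphism. Combining \eqref{eq:confo} with the Cauchy–Schwarz inequality gives $|\partial f_\phi/\partial x|\ge|\partial h_\phi/\partial x|,$ and if equality held at some point $p,$ then \eqref{eq:confo} would force both $|\partial f_\phi/\partial y|=|\partial h_\phi/\partial y|$ and equality in Cauchy–Schwarz, i.e. $|\langle\partial f_\phi/\partial x,\partial f_\phi/\partial y\rangle|=|\partial f_\phi/\partial x|\cdot|\partial f_\phi/\partial y|,$ meaning $\partial f_\phi/\partial x$ and $\partial f_\phi/\partial y$ are linearly dependent; this contradicts $f_\phi$ being a local diffeomorphism. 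Hence the strict inequality \eqref{eq:spa} holds everywhere and $X_\phi$ is a (possibly branched) conformal maximal immersion with $f_\phi=\phi\circ\Pi,$ which is exactly the assertion.

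I expect the main obstacle to be purely bookkeeping rather than analytic: namely being careful about the two distinct covers and the sense in which $X_\phi$ is ``possibly branched.'' The branching is inherited from the $2$-sheeted cover $\widetilde\Pi$ over the zeros of $\Phi_\phi$ (equivalently, the zeros of $\widehat\varphi$), and at such points the maximality estimate above degenerates only because $\widehat\varphi$ vanishes there, not because $f_\phi$ fails to be an immersion; so I would phrase the conclusion as an immersion away from the branch locus, matching the ``possibly branched'' wording. A secondary subtlety is that when $\widetilde\varphi$ already has no real periods the cover $\widehat\Rcal$ is finite (indeed trivial), whereas in general it is infinitely sheeted, but this does not affect the argument since all the verifications are local on $\widehat\Rcal.$ Since every ingredient—the square-root cover, the period-killing cover, and the pointwise inequalities—has been established in the discussion preceding the statement, the proof reduces to recording this construction and invoking the local-diffeomorphism argument for strictness in \eqref{eq:spa}.
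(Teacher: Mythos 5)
Your proposal is correct and follows essentially the same route as the paper: the $2$-sheeted cover giving a square root of $\Phi_\phi,$ the further cover killing the real periods of $\widetilde\varphi\,dz,$ the definition $h_\phi=\Re\int\widehat\varphi\,dz,$ conformality via the agreement of the Hopf differentials of $f_\phi$ and $h_\phi,$ and the strict spacelike inequality via Cauchy--Schwarz plus the local-diffeomorphism contradiction. This is exactly the argument the paper records in the discussion preceding the statement.
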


Let us now focus on the particular case when $U:=\Rcal\subset\c$ is a finitely connected circular domain, $N$ is the sphere $\s^2$ with a finite number of points removed and $\phi$ extends $C^1$ to the closure $\overline{U}$ of $U.$ Denote by $\Ncal$ the double of $\overline{U}$ (see \cite{St} for details on this construction). Recall that $\Ncal$ is a compact Riemann surface carrying an antiholomorphic involution $\Jcal:\Ncal\to\Ncal$ having the boundary of $\overline{U}$ as set of fixed points. Let $\Phi$ be a smooth quadratic differential on $\overline{U}$ and holomorphic on $U.$ Assume that $\Phi=\varphi(z)dz^2$ with $\varphi(x)\in\r$ $\forall x$ for any local conformal parameter $z=x+\imath y$ on $\overline{U}$ applying a piece of the boundary $\partial U$ of $\overline{U}$ into $\r\subset\c,$ then $\Phi$ extends holomorphically to $\Ncal$ in the form $\Jcal^\ast \Phi = \overline \Phi.$

Let $\phi:U\to N$ be a harmonic diffeomorphism extending $C^1$ to $\overline{U}.$ Obviously $\phi$ is constant over any connected component of $\partial U.$ Let $z=x+\imath y$ be a conformal parameter on $\overline{U}$ with $y|_{\partial U}=0.$ Then $\partial\phi/\partial x=0$ on $\partial U,$ hence the Hopf differential of $\phi$ can be written on $\partial U$ as 
\begin{equation}\label{eq:doble1}
(\Phi_\phi)|_{\partial U}=-\frac14\left|\frac{\partial\phi}{\partial y}\right|^2 dz^2.
\end{equation}
In particular, $\Phi_\phi$ extends holomorphically to $\Ncal$ with $\Jcal^{\ast}
\Phi_{\phi}=\overline{\Phi_{\phi}}.$ This particularly gives that 
\begin{equation}\label{eq:doble2}
\text{$\Phi_\phi$ has finitely many zeros on $U.$}
\end{equation}

Now, as above, we can take a $2$-sheeted covering $(\widetilde{U},\Pi)$ of $U$ such that $\widetilde{\Phi}_\phi:=\Phi_\phi\circ\Pi$ has a well defined square root. Write $\widetilde{\Phi}=(\varphi(z)dz)^2$ in a local conformal parameter $z$ on $\widetilde{U}.$ From \eqref{eq:doble1} one obtains that $\varphi(z)dz$ has no real periods. Then taking into account \eqref{eq:doble2} and following the discussion preceding Proposition \ref{pro:covering} one has the following

\begin{theorem}\label{th:covering}
Let $\phi:U\to\s^2-\{p_1,\ldots,p_\mgot\}$ be a harmonic diffeomorphism extending $C^1$ to $\overline{U},$ where $U$ is a finitely connected circular domain and $\{p_1,\ldots,p_{\mgot}\}$ is a finite subset in $\s^2.$ 

Then there exist a $2$-sheeted covering $(\widehat{U},\Pi)$ of $U$ and a possibly finitely branched conformal maximal immersion $X_\phi=(f_\phi,h_\phi):\widehat{U}\to\s^2\times\r_1$ such that $f_\phi=\phi\circ\Pi.$ 
\end{theorem}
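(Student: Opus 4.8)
This statement is a refinement of Proposition \ref{pro:covering} in which the two qualifiers ``$2$-sheeted'' and ``finitely branched'' are extracted from the boundary behaviour of $\phi$ recorded in \eqref{eq:doble1} and \eqref{eq:doble2}. The plan is to run the construction preceding Proposition \ref{pro:covering} for the harmonic local diffeomorphism $\phi$ and to verify that no covering beyond the square-root one is required.

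First, since $\phi$ extends $C^1$ to $\overline U$ and is constant on each component of $\partial U,$ its Hopf differential $\Phi_\phi$ is real and strictly negative along $\partial U$ by \eqref{eq:doble1}; in particular $\Phi_\phi$ does not vanish on $\partial U$ and, as explained above, extends holomorphically to the double $\Ncal$ of $\overline U$ with $\Jcal^\ast\Phi_\phi=\overline{\Phi_\phi}.$ Being a nontrivial holomorphic quadratic differential on the compact surface $\Ncal,$ it has only finitely many zeros, which is exactly \eqref{eq:doble2}. Consequently the $2$-sheeted covering $\Pi\colon\widetilde U\to U$ on which $\sqrt{\Phi_\phi}$ becomes single valued is branched only over the finitely many odd-order zeros of $\Phi_\phi,$ so the resulting immersion will be at worst finitely branched.

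The decisive step is to show that $\varphi\,dz=\sqrt{\widetilde\Phi_\phi}$ has no real periods on $\widetilde U.$ In the boundary-adapted parameter $z=x+\imath y$ with $y|_{\partial U}=0,$ equation \eqref{eq:doble1} gives $\varphi^2<0$ along $\partial U,$ so $\varphi\,dz$ is purely imaginary there and hence $\Re\oint\varphi\,dz$ vanishes over every cycle that can be pushed into $\partial\widetilde U.$ To reach the remaining homology classes I would pass to the double: $\sqrt{\Phi_\phi}$ extends to a holomorphic $1$-form $\omega$ on a $2$-sheeted cover $\widehat\Ncal\to\Ncal$ whose nontrivial deck transformation $\iota$ satisfies $\iota^\ast\omega=-\omega,$ while $\Jcal$ lifts to an antiholomorphic involution $\widehat\Jcal$ with $\widehat\Jcal^\ast\omega=-\overline\omega$ (the sign being the one forced by $\Re\omega=0$ on the fixed locus). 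As $\omega$ is anti-invariant under $\iota,$ its periods vanish on the $\iota$-invariant part of $H_1(\widehat\Ncal),$ so only the anti-invariant (Prym) classes carry periods; the aim is then to combine the relations $\oint_{\iota\gamma}\omega=-\oint_\gamma\omega$ and $\oint_{\widehat\Jcal\gamma}\omega=-\overline{\oint_\gamma\omega}$ with $\Re\omega|_{\partial\widetilde U}=0$ to conclude $\Re\oint_\gamma\omega=0$ for every loop $\gamma$ in $\widetilde U.$ Granting this, $\varphi\,dz$ has no real periods, the second covering in the construction preceding Proposition \ref{pro:covering} is trivial, and $\widehat U=\widetilde U$ is genuinely $2$-sheeted over $U.$

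Finally I would assemble the immersion as before: with $f_\phi=\phi\circ\Pi$ and $h_\phi=\Re\int\varphi\,dz$ now single valued on $\widehat U,$ the Hopf differentials of $f_\phi$ and $h_\phi$ agree, so $X_\phi=(f_\phi,h_\phi)\colon\widehat U\to\s^2\times\r_1$ is a conformal harmonic map; the strict inequality \eqref{eq:spa} holds because $\phi,$ and hence $f_\phi,$ is a local diffeomorphism, by the Cauchy--Schwarz argument preceding Proposition \ref{pro:covering}. Thus $X_\phi$ is a possibly finitely branched conformal maximal immersion with $f_\phi=\phi\circ\Pi,$ as claimed. The main obstacle is the middle step: upgrading the boundary identity $\Re(\varphi\,dz)|_{\partial U}=0$ to the vanishing of \emph{all} real periods on $\widetilde U,$ i.e.\ controlling those homology classes that are not represented by boundary curves (these can appear precisely when the odd zeros of $\Phi_\phi$ create genus in the branched cover). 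It is here that the reality condition $\Jcal^\ast\Phi_\phi=\overline{\Phi_\phi}$ on the double, rather than the boundary computation alone, must carry the argument.
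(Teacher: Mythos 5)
Your proposal follows the paper's proof essentially step for step: use the constancy of $\phi$ on each boundary circle to get \eqref{eq:doble1}, extend $\Phi_\phi$ holomorphically to the double $\Ncal$ with $\Jcal^\ast\Phi_\phi=\overline{\Phi_\phi}$ and deduce \eqref{eq:doble2}, pass to the $2$-sheeted covering carrying $\sqrt{\widetilde\Phi_\phi}$, check that $\varphi\,dz$ has no real periods, and then run the construction preceding Proposition \ref{pro:covering}. The one step you flag as the main obstacle --- promoting the boundary identity $\Re(\varphi\,dz)|_{\partial U}=0$ to the vanishing of real periods over \emph{all} of $H_1(\widetilde U)$, in particular over the classes created by odd-order zeros of $\Phi_\phi$ when the branched cover acquires genus --- is exactly the step the paper disposes of in a single sentence (``From \eqref{eq:doble1} one obtains that $\varphi(z)dz$ has no real periods''), with no separate discussion of non-boundary cycles; so the Prym/lifted-involution apparatus you sketch goes beyond what the paper records, and the paper does not supply the detail you leave open. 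In short: same approach, and the point you honestly label ``granting this'' is precisely the point the paper leaves implicit, resting only on the boundary computation you have already carried out.
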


In the proof of the above theorem, we have used that $\phi$ extends $C^1$ to $\overline{U}$ in order to obtain that the Hopf differential $\Phi_\phi$ of $\phi$ extends holomorphically to the double of $U.$ The authors do not know whether this hypothesis can be removed from the statement of the theorem.


\end{document}